\theoremstyle{plain}
\newtheorem{theorem}{Theorem}[section]
\newtheorem{conjecture}{Conjecture}[section]
\newtheorem{lemma}[theorem]{Lemma}
\newtheorem{corollary}[theorem]{Corollary}
\theoremstyle{remark}
\newtheorem{remark}{Remark}[section]
\newcommand{\Rmnum}[1]{\expandafter\@slowromancap\romannumeral #1@}
\def\rrw{\rightarrow}
\newcommand{\qbm}[2]{\left[ {#1 \atop #2} \right]}
\def\bN{\mathbb N}
\def\bZ{\mathbb Z}
\def\bC{{\mathbb C}}
\def\rrw{\rightarrow}
\numberwithin{equation}{section}
\begin{document}

\title[Expansions of averaged truncated $q$-series]{Expansions of
  averaged truncations of basic hypergeometric series}
\author{Michael J.\ Schlosser and Nian Hong Zhou}

\address{Michael J.\ Schlosser and Nian Hong Zhou:
  Fakult\"{a}t f\"{u}r Mathematik, Universit\"{a}t Wien\\
Oskar-Morgenstern-Platz 1, A-1090, Vienna, Austria}%
\email{michael.schlosser@univie.ac.at; nianhongzhou@outlook.com}%
%\email{michael.schlosser@univie.ac.at; nianhongzhou@outlook.com}%

\thanks{The first author was partially, the second author fully,
  supported by Austrian Science Fund FWF
  \href{https://doi.org/10.55776/P32305}{10.55776/P32305}.}%
\subjclass{Primary 33D15;  Secondary  05A15; 05A17; 05A20; 05A30}%
\keywords{basic hypergeometric series; partitions; averaged truncations;
  combinatorial inequalities}%

\begin{abstract}
Motivated by recent work of George Andrews and Mircea Merca on
the expansion of the quotient of the truncation of Euler's
pentagonal number series by the complete series, we provide similar
expansion results for averages
involving truncations of selected,
more general, basic hypergeometric series. In particular,
our expansions include new results for averaged truncations
of the series appearing in the Jacobi triple product identity,
the $q$-Gau{\ss} summation, and the very-well-poised
${}_5\phi_5$ summation. We show how special cases of our
expansions can be used to recover various existing results.
In addition, we establish new inequalities, such as one for
a refinement of the number of partitions into three different
colors.
\end{abstract}
\maketitle

\section{Introduction}
Let $\bN$ and $\bN_0$ be the sets of positive and non-negative
integers, respectively.
Throughout this paper, we assume $q$ to be a fixed
complex number satisfying $0<|q|<1$.
For any indeterminant $a$ and complex number $c$,
let the \textit{$q$-shifted factorial} (cf.\ \cite{MR2128719})
be defined by
$$
(a;q)_\infty:=\prod_{j\ge 0}(1-aq^j),\quad\text{and}\quad
(a;q)_c:=\frac{(a;q)_\infty}{(aq^c;q)_\infty}.
$$
Products of $q$-shifted factorials are compactly denoted as
$$
(a_1,\ldots, a_m;q)_c:=\prod_{1\le j\le m}(a_j;q)_c
$$
for $m\in\bN$ and $c\in \bC\cup\{\infty\}$.
Further, for integers $n$ and $k$ the
\textit{$q$-binomial coefficient} is defined as
\begin{equation*}
\qbm{n}{k}_q:=
\frac{(q;q)_n}{(q;q)_k(q;q)_{n-k}}.
\end{equation*}
For integers $n\ge k\ge 0$, $\qbm{n}{k}_q$ is known to be a polynomial
in $q$ (of degree $k\cdot(n-k)$) with non-negative integer coefficients,
which is most easily confirmed by induction using any one of the two
standard recurrence relations for the $q$-binomial coefficients
(cf.\ \cite[Appendix~(I.45)]{MR2128719}).

Euler's pentagonal number theorem
(cf.\ \cite[Equation~(8.10.10)]{MR2128719}) can be formulated as
\begin{equation}\label{PNT}
(q;q)_\infty=\sum_{n\ge 0}(-1)^nq^{n(3n+1)/2}(1-q^{2n+1}).
\end{equation}
(A more compact formulation writes the right-hand side as a bilateral series
but we have a reason for using the above form.)
In 2012, Andrews and Merca~\cite{MR2946378} gave an explicit expansion for
the averaged truncation\footnote{By the \textit{averaged truncation} of a
series we mean the quotient of the truncated series by the complete series.
Moreover, we also speak of a averaged truncation of a summation
(an identity typically equating a sum with a closed form product)
by which we similarly mean the truncation of the series appearing
on the sum side divided by the expression
appearing on the product side of the identity.}
of Euler's pentagonal number series appearing in \eqref{PNT}.
In particular, they established, for any $k\in\bN$, the following
explicit identity:
\begin{equation}\label{AM-id}
  \frac{1}{(q;q)_\infty}\sum_{0\le n<k}(-1)^nq^{n(3n+1)/2}(1-q^{2n+1})
  =1+(-1)^{k-1}\sum_{n\ge k} \frac{q^{\binom{k}{2}+(k+1)n}}{(q;q)_k}
  \qbm{n-1}{k-1}_q.
\end{equation}
What is surprising is that depending on the parity of the truncation
integer $k$, the coefficients of $q^m$ of the
series on the right-hand side of \eqref{AM-id},
for positive $m$, are all non-negative or all non-positive.
(The less surprising fact is that due to the factor $q^{\binom k2}$
in the series on the right-hand side, the order of the least term
grows in quadratic order. The growth is principally expected, since
for $k\to\infty$ the series on the left-hand side,
whose coefficients are all integers, tends to $1$,
a property which is shared by all averaged truncations of series.)

As a consequence, Andrews and Merca~\cite[Theorem~1.1]{MR2946378}
proved that for $k\ge 1$,
\begin{equation}\label{AMIE}
  (-1)^{k-1}\sum\limits_{0\le j<k}(-1)^j
  \Big(p(n-j(3j+1)/2)-p(n-j(3j+5)/2-1) \Big)=M_k(n),
\end{equation}
where $p(n)$ is the number of partitions of $n$, and $M_k(n)$ is the
number of partitions of $n$ in which $k$ is the least integer that
is not a part and there are more parts greater than $k$ than
there are less than $k$.

A more general identity than Euler's pentagonal number theorem is Jacobi's
triple product identity (cf.\ \cite[Appendix~(II.28)]{MR2128719}),
which is
\begin{equation}\label{JTP}
(z,q/z,q ;q)_{\infty}=\sum_{-\infty<n<\infty}(-1)^nq^{\binom{n}{2}}z^n.
\end{equation}
The Jacobi triple product identity \eqref{JTP} reduces to the pentagonal
number theorem \eqref{PNT} if $q$ is replaced by $q^3$ followed by letting
$z=q$ (or $z=q^2$).

In their study on averaged truncations of Jacobi's triple product identity,
Andrews and Merca~\cite[Question~(2)]{MR2946378} and Guo and
Zeng~\cite[Conjecture~6.1]{MR3007145} posed the following conjecture:
\begin{conjecture}\label{AMconj}
For positive integers $m,k,R,S$ with $1\le S<R/2$, the coefficient of $q^m$ in
\begin{equation*}
\frac{(-1)^{k-1}}{(q^S,q^{R-S},q^{R};q^{R})_{\infty}}
\sum_{0\le n<k}(-1)^nq^{\binom{n+1}{2}R-nS}(1-q^{(2n+1)S})
\end{equation*}
is non-negative.
\end{conjecture}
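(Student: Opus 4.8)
The plan is to reduce the conjecture to an explicit Andrews--Merca-type expansion of the averaged truncation and then read off non-negativity from its shape. Write $P:=(q^S,q^{R-S},q^R;q^R)_\infty$ and, for the truncated numerator, $T_k:=\sum_{0\le n<k}(-1)^nq^{\binom{n+1}{2}R-nS}(1-q^{(2n+1)S})$. Substituting $q\mapsto q^R$ and $z=q^S$ in the Jacobi triple product \eqref{JTP} gives $P=\sum_{n}(-1)^nq^{R\binom n2+nS}$, and a short index shift identifies $T_k$ with the symmetric bilateral truncation $T_k=\sum_{n=-(k-1)}^{k}(-1)^nq^{R\binom n2+nS}$. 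In particular $T_k\to P$ as $k\to\infty$, so the averaged truncation $A_k:=T_k/P$ has constant term $1$ and tends to $1$. Since the conjecture only concerns the coefficients of $q^m$ with $m\ge1$, it is equivalent to showing that $(-1)^{k-1}(A_k-1)=(-1)^{k}\,(P-T_k)/P$ has non-negative power-series coefficients.

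The base case already exhibits the mechanism: $T_1=1-q^S$, and cancelling $1-q^S$ against the factor $(q^S;q^R)_\infty=(1-q^S)(q^{R+S};q^R)_\infty$ of $P$ yields
\begin{equation*}
A_1=\frac{1}{(q^{R-S},q^R,q^{R+S};q^R)_\infty},
\end{equation*}
a product of reciprocal $q^R$-shifted factorials whose exponents $R-S,R,R+S$ are all positive (here $S<R$ suffices); hence $A_1$ has non-negative coefficients, which settles $k=1$. For general $k$, the two terms gained in passing from $T_k$ to $T_{k+1}$ combine to the clean recurrence
\begin{equation*}
A_{k+1}-A_k=(-1)^k\,\frac{q^{R\binom{k+1}{2}-kS}\bigl(1-q^{(2k+1)S}\bigr)}{P}.
\end{equation*}
The goal of the main step is to produce an explicit identity of the form $(-1)^{k-1}A_k=(-1)^{k-1}+G_k$ in which each summand of $G_k$ is a product of a monomial $q^{e}$ with $e\ge1$, reciprocals $1/(q^R;q^R)_m$, $q^R$-binomial coefficients $\qbm{a}{b}_{q^R}$, and blocks such as $(q^{R-2S};q^R)$-products or single factors $(1-q^{\text{positive}})$, every one of which has non-negative coefficients. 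Such an identity can be obtained either by expanding $1/P$ through an iterated $q$-binomial (Heine--Cauchy) argument on the two linear factors $(q^S;q^R)_\infty$ and $(q^{R-S};q^R)_\infty$, or by guessing $G_k$ from the cases $k=1,2$ and verifying it against the recurrence above together with the base case.

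Granting such an expansion, the conclusion is immediate: for $m\ge1$ the coefficient of $q^m$ in $(-1)^{k-1}A_k$ equals that of $G_k$, a finite sum of products of non-negative integers, hence non-negative. The hypothesis $1\le S<R/2$ enters exactly here, guaranteeing the chain $0<S<R-S<R<R+S$ and in particular $R-2S>0$, so that the exponents in the monomials $q^e$ and the arguments of the $(q^{R-2S};q^R)$-type blocks are positive and these blocks genuinely have non-negative coefficients; the least exponent appearing in $G_k$ is $R\binom{k+1}{2}-kS$, matching the leading term of $(P-T_k)/P$. The main obstacle is precisely the construction of this expansion in manifestly non-negative form. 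A naive termwise expansion does not suffice: shifting the index in each tail $\sum_{n\ge k+1}$ and $\sum_{n\le -k}$ of $P-T_k$ produces partial theta functions, which possess no product form, while $1/P$ has sign-alternating coefficients, so no single tail divided by $P$ is termwise non-negative. The technical heart is therefore a genuine reorganization --- a finite form of the Jacobi triple product, or an equivalent Bailey-pair/telescoping step --- that trades the partial theta tails for the $q^R$-binomial structure above, after which positivity becomes visible; carrying this out while keeping every exponent positive under the constraint $S<R/2$ is the crux of the argument.
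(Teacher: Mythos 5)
Your preliminary reductions are all correct as far as they go: the identification of $T_k$ with the bilateral truncation $\sum_{-(k-1)\le n\le k}(-1)^nq^{R\binom{n}{2}+nS}$, the settled case $k=1$, and the recurrence $A_{k+1}-A_k=(-1)^kq^{R\binom{k+1}{2}-kS}(1-q^{(2k+1)S})/P$. But the argument has a genuine gap, and you name it yourself: the explicit identity $(-1)^{k-1}A_k=(-1)^{k-1}+G_k$ with $G_k$ in manifestly non-negative form is never produced. Everything after ``granting such an expansion'' is conditional on an object whose existence is essentially equivalent to the conjecture, so the proposal assumes what is to be proved. Neither of the two construction routes you sketch is carried out, and neither is routine: expanding $1/P$ factor by factor cannot work for the reason you yourself give (the partial theta tails have no product form and $1/P$ alternates in sign), and ``guessing $G_k$ and verifying it against the recurrence'' hides the real work, since checking that a candidate $G_k$ satisfies $G_{k+1}-G_k=q^{R\binom{k+1}{2}-kS}(1-q^{(2k+1)S})/P$ is a nontrivial $q$-series identity of exactly the same depth as the expansion being sought. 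What you have is a correct strategy plus an accurate description of the missing step, not a proof.

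The missing ingredient is precisely what the paper supplies. The tails of the bilateral series are, after index shifts, differences of partial theta functions, and the paper converts them into manifestly structured form using the Schilling--Warnaar transformation \eqref{eqtm0} of Lemma~\ref{lem2} together with its one-variable degeneration \eqref{eqtm01}; this yields Theorem~\ref{mth1}, and identity \eqref{eqtjs1} is exactly your $G_k$. Indeed, under $q\mapsto q^R$, $z\mapsto q^S$ with $1\le S<R/2$, every constituent of the right-hand side of \eqref{eqtjs1} --- the monomials $q^{Rn+R\binom{k}{2}-kS}$ and $q^{Rn+R\binom{k}{2}+(2-k)S}$ (whose exponents stay positive under the stated hypothesis), the factor $(1+z)$, the geometric sum $(1-z^{2k-1})/(1-z)=1+z+\cdots+z^{2k-2}$, the reciprocal $q^R$-shifted factorials in $q^{R-S}$, $q^{R+S}$, $q^R$, and the $q^R$-binomial coefficients --- is a power series with non-negative coefficients, and the conjecture follows by comparing coefficients of $q^m$ for $m\ge 1$. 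Without Lemma~\ref{lem2} or an equivalent device (for instance the Wang--Yee expansion \eqref{WY1}, which the paper notes also implies the conjecture, or Yee's combinatorial argument), your outline cannot be completed as written.
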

Equation~\eqref{AMIE} is related to the special case
$(R,S)=(3,1)$ of Conjecture~\ref{AMconj}. Guo and
Zeng~\cite{MR3007145} provided further evidence for this conjecture.
In particular, they proved it for the case $(R,S)=(4,1)$, and a weaker
result for the case $(R,S)=(2,1)$, which they achieved by establishing
the following averaged truncations of two identities of Gau{\ss}:
\begin{equation*}
\frac{(-q;q)_\infty}{(q;q)_\infty}\bigg(1+2\sum_{j=1}^k (-1)^j q^{j^2}\bigg)
=1+(-1)^k \sum_{n>k} \frac{(-q;q)_{k} (-1)_{n-k}q^{(k+1)n}}{(q;q)_n}
\qbm{n-1}{k}_q,
\end{equation*}
and
\begin{equation*}
\frac{(-q;q^2)_\infty}{(q^2;q^2)_\infty}
\sum_{j=0}^{k-1}(-1)^jq^{j(2j+1)}(1-q^{2j+1})
=1-(-1)^{k} \sum_{n\ge k} \frac{(-q;q^2)_{k} (-q;q^2)_{n-k}q^{2(k+1)n-k}}
{(q^2;q^2)_n}\qbm{n-1}{k-1}_{q^2},
\end{equation*}
for any $k\in\bN$. Andrews and Merca~\cite{MR3718080}
pointed out that the above two identities are essentially corollaries
of the Rogers--Fine identity. In 2015, Mao~\cite{MR3280682} and
Yee~\cite{MR3280681} independently proved Conjecture~\ref{AMconj}
by different means; Mao's proof is algebraic, while Yee's proof is
combinatorial. The conjecture was later also confirmed by
He, Ji, and Zang~\cite{MR3398853} using combinatorial arguments.

More recently, Wang and Yee~\cite[Theorem~2.3]{MR3937794}
proved the following averaged truncation of the Jacobi triple product identity:
\begin{align}\label{WY1}
  &\frac{1}{(z,q/z,q;q)_\infty}\sum_{0\le n\le m}(-1)^nq^{n(n+1)/2}z^{-n}
    (1-z^{2n+1})\nonumber\\*
&=1+(-1)^{m}q^{\binom{m+1}{2}}\sum_{n\ge m+1} \sum_{\substack{
i,j,h,k\ge 0\\i+j+h+k=n
}} \frac{q^{(m+1)j+hk}z^{h-k}}{(q;q)_i(q;q)_j(q;q)_h(q;q)_k}\qbm{n-1}{m}_{q},
\end{align}
for any $m\in\bN_0$. With $q$ and $z$ replaced by $q^R$ and $q^S$,
respectively, in \eqref{WY1}, Wang and Yee~\cite[Theorem~1.1]{MR3937794}
arrived at a result reminiscent of \eqref{AM-id} and were able to reprove
Conjecture~\ref{AMconj}. Additionally, in \cite[Theorem~2.7]{MR3937794}
they also provided the following companion result to \eqref{WY1}:
\begin{align}\label{WY2}
  &\frac{1}{(z,q/z,q;q)_\infty}
    \sum_{-m\le n\le m}(-1)^nq^{n(n-1)/2}z^{n}\nonumber\\*
&=1+(-1)^{m}q^{\binom{m+1}{2}}\sum_{n\ge m+1} \sum_{\substack{
i,j,h,k\ge 0\\i+j+h+k=n
}} \frac{q^{mj+h(k-1)+n}z^{h-k}}{(q;q)_i(q;q)_j(q;q)_h(q;q)_k}\qbm{n-1}{m}_{q},
\end{align}
for any $m\in\bN_0$.

More results on averaged truncations of theta series can be found in the
papers by Wang and Yee~\cite{MR4064775}, by Xia, Yee and Zhao~\cite{MR4338939},
by Xia~\cite{MR4388462} and by Yao~\cite{MR4497425}.

The aim of this paper is to present averaged truncations of
selected basic hypergeometric series relating to explicit summations.
In particular, we establish new averaged truncations for the
Jacobi triple product identity, the $q$-Gau{\ss} summation, and
the very-well-poised ${_5}\phi_5$ summation.
In the following subsections we present our results and discuss
some consequences. The proofs are deferred to Section~\ref{sec:proofs}.

\subsection{Averaged truncations of  Jacobi's triple product identity}
\label{subsec:jtpi}
Our first result is a pair of two new averaged truncations of
Jacobi's triple product identity. Their advantage is that
they allow us to conveniently deduce certain inequalities.

\begin{theorem}\label{mth1}Let $k\in\bN$. We have
%\begin{subequations}
\begin{align}\label{eqtjs1}
&\frac{(-1)^{k-1}}{(z,q/z,q;q)_\infty}\sum_{0\le n<k}(-1)^nq^{n(n+1)/2}
        z^{-n}(1-z^{2n+1})-(-1)^{k-1}\nonumber\\*
  &=\sum_{n\ge k}\frac{z^{-k}(1+z)q^{n+\binom{k}{2}}}{(q/z,qz;q)_{n}}
     \qbm{2n-1}{n-k}_q+\frac{1-z^{2k-1}}{1-z}
     \sum_{n\ge k}\frac{z^{2-k}\,q^{n+\binom{k}{2}}}
     {(q;q)_{n-k}(qz;q)_{n}(q/z;q)_\infty}.
\end{align}
and
\begin{align}\label{eqtjs2}
  &\frac{(-1)^{k-1}}{(qz,q/z,q;q)_\infty}\sum_{-k<n< k}(-1)^nq^{n(n-1)/2}z^{n}
                                          -(-1)^{k-1}(1-z)\nonumber\\*
  &=\sum_{n\ge k}\frac{z^{-k}\,q^{n+\binom{k}{2}}}
     {(q;q)_{n-k}(q/z;q)_{n}(qz;q)_\infty}
     +\sum_{n\ge k}\frac{z^k\,q^{n-k+\binom{k}{2}}}
     {(q;q)_{n-k}(qz;q)_{n-1}(q/z;q)_\infty}.
\end{align}
%\end{subequations}
\end{theorem}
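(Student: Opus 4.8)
The plan is to reduce both identities to an expansion of the \emph{averaged tails} of Jacobi's triple product \eqref{JTP}, and then to resum those tails. First I would rewrite each truncated theta sum as the complete series minus its two tails. For \eqref{eqtjs1}, collecting the terms $m=-n$ and $m=n+1$ of \eqref{JTP} shows $\sum_{0\le n<k}(-1)^nq^{n(n+1)/2}z^{-n}(1-z^{2n+1})=\sum_{m=-(k-1)}^{k}(-1)^mq^{\binom m2}z^m$, so the truncation omits exactly the tails $m\ge k+1$ and $m\le -k$. Dividing by $(z,q/z,q;q)_\infty$ and using $\frac1{(z,q/z,q;q)_\infty}\sum_{m\in\bZ}(-1)^mq^{\binom m2}z^m=1$, the constant $1$ cancels the subtracted $(-1)^{k-1}$, leaving (up to the sign $-(-1)^{k-1}$) the sum of the two one-sided averaged tails. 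The same reduction applies to \eqref{eqtjs2}: here $\sum_{-k<n<k}(-1)^nq^{n(n-1)/2}z^n=\sum_{m=-(k-1)}^{k-1}(-1)^mq^{\binom m2}z^m$, the relation $(z;q)_\infty=(1-z)(qz;q)_\infty$ gives $\frac1{(qz,q/z,q;q)_\infty}\sum_{m\in\bZ}(-1)^mq^{\binom m2}z^m=1-z$, the subtracted $(-1)^{k-1}(1-z)$ is absorbed, and the tails $m\ge k$ and $m\le -k$ remain.

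Next I would expand each averaged tail. Writing $m=k+s$ (resp.\ $m=-k-t$) and using $\binom{k+s}2=\binom k2+ks+\binom s2$ exhibits each tail as $q^{\binom k2}$ times a monomial in $z^{\pm1}$ times a partial theta function in the variable $q^kz^{\pm1}$; this already explains the common factor $q^{\binom k2}$, the leading powers $z^{\mp k}$, and the lower bound $n\ge k$ on the right of \eqref{eqtjs1}--\eqref{eqtjs2}. To convert the quotient (partial theta)$/(z,q/z,q;q)_\infty$ into the stated series, I would insert the Cauchy ($q$-binomial) expansions of $1/(qz;q)_\infty$, $1/(q/z;q)_\infty$ and $1/(q;q)_\infty$, form the resulting Cauchy products, and index the outcome by the total degree $n$. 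The coefficient $\qbm{2n-1}{n-k}_q$ together with the denominator $(q/z,qz;q)_n$ in the first sum of \eqref{eqtjs1} should emerge from a Chu--Vandermonde/Pascal convolution of the $z$- and $z^{-1}$-parts at fixed total degree, while the second sum, distinguished by the factor $(q/z;q)_\infty$ in its denominator, should collect the boundary contribution in which one partial theta resums into a complete product via Euler's identity; the prefactors $z^{-k}(1+z)$ and $\tfrac{1-z^{2k-1}}{1-z}\,z^{2-k}$ would then fall out of the bookkeeping of the two boundary terms.

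A cleaner route to the same end is available because the left-hand sides are already known. With $m=k-1$, the left-hand side of \eqref{eqtjs1} equals $q^{\binom k2}$ times the four-fold sum on the right of \eqref{WY1}, and, using $(qz,q/z,q;q)_\infty=(1-z)^{-1}(z,q/z,q;q)_\infty$, the left-hand side of \eqref{eqtjs2} equals $(1-z)\,q^{\binom k2}$ times the four-fold sum on the right of \eqref{WY2}. Thus it suffices to prove the purely combinatorial identities asserting that, after setting $m+1=k$ and renaming Wang--Yee's summation index $k$ to $l$, the four-fold sum $\sum_{i+j+h+l=n}\frac{q^{kj+hl}z^{h-l}}{(q;q)_i(q;q)_j(q;q)_h(q;q)_l}$ (and its \eqref{WY2}-analogue) collapses, after the partial summations on $i,j$ and on $h,l$, into the two terms displayed on the right of \eqref{eqtjs1} (resp.\ \eqref{eqtjs2}).

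I expect the main obstacle to lie exactly in this last collapse: carrying out the inner summations so as to produce the precise coefficient $\qbm{2n-1}{n-k}_q$ together with the exact $z$-prefactors amounts to a nontrivial $q$-binomial convolution identity, and isolating the asymmetric $(q/z;q)_\infty$-term cleanly from the symmetric $(q/z,qz;q)_n$-term is delicate. Two auxiliary points need care: the interchanges of summation are legitimate because $0<|q|<1$ forces absolute convergence on a suitable annular region in $z$, and as a consistency check one can verify the base case $k=1$ together with the elementary recurrence $L_{k+1}+L_k=q^{\binom{k+1}2}z^{-k}(1-z^{2k+1})/(z,q/z,q;q)_\infty$ for the left-hand side $L_k$ of \eqref{eqtjs1} (with an analogous recurrence for \eqref{eqtjs2}), which by induction reduces the whole theorem to checking that the right-hand sides obey the same recurrence.
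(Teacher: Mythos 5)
Your reduction of both left-hand sides to averaged tails of the bilateral series \eqref{JTP} is correct, and it matches the paper's own starting point: after shifting the summation index, each tail becomes $q^{\binom{k}{2}}$ times a partial theta series in the variable $q^kz^{\pm 1}$. The genuine gap is in what comes next. Everything hinges on resumming series of the form $\sum_{n\ge 1}(-1)^nq^{\binom{n}{2}}t^n$ (and differences of two such series) against the infinite products, and none of your three proposed routes actually does this. The Cauchy-product/Chu--Vandermonde route cannot work as described, because a partial theta function is not an infinite product: multiplying out $1/(qz;q)_\infty$, $1/(q/z;q)_\infty$ and $1/(q;q)_\infty$ against it yields a multi-sum with no Pascal-type collapse, and you yourself defer the collapse as ``a nontrivial $q$-binomial convolution identity'' instead of proving it. The Wang--Yee route (which is set up correctly: the left-hand side of \eqref{eqtjs1} is indeed $q^{\binom{k}{2}}$ times the four-fold sum in \eqref{WY1} with $m=k-1$, and similarly for \eqref{eqtjs2} and \eqref{WY2}) merely restates the theorem as the equally unproven assertion that the four-fold sum collapses to the two displayed terms. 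The induction route is also set up correctly, but it shifts the entire difficulty into verifying that the right-hand sides satisfy the recurrence, which is once again the same resummation problem in disguise.

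The missing ingredient is a specific known identity: the Schilling--Warnaar/Berkovich transformation \eqref{eqtm0}, which converts $\sum_{n\ge1}(-1)^nq^{\binom{n}{2}}(t_1^n-t_2^n)$ into $(t_2-t_1)(q,qt_1,qt_2;q)_\infty$ times a single unilateral sum, together with its one-variable limit \eqref{eqtm01}. This is exactly what the paper applies, with $t_1=q^k/z$, $t_2=q^kz$ and $t=q^kz$: the specialization $t_1t_2=q^{2k}$ is what produces the factor $(q^{n+2k};q)_n$ and hence, after simplification, the coefficient $\qbm{2n-1}{n-k}_q$ with denominator $(q/z,qz;q)_{n}$ in the first sum of \eqref{eqtjs1}, while the single partial theta handled by \eqref{eqtm01} produces the asymmetric terms carrying $(q/z;q)_\infty$ in the denominator. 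Without citing or reproving such a partial-theta transformation (itself a nontrivial result, not a routine convolution), your plan cannot be completed; with it, your setup leads to the paper's proof essentially verbatim.
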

%\item We have checked Theorem \ref{mth1} using {\bf Mathematica}.
%  The numerical data supports the theorem.
  \begin{remark}
    The special case $q\mapsto q^R$ and $z\mapsto q^S$, respectively, of
  Equation~\eqref{eqtjs1}, reproves Conjecture~\ref{AMconj}.
\end{remark}
\begin{remark}
    Let $a\le b$ be any pair of integers, and let $c=\lceil(a+b)/2\rceil$.
We have
$$(-z)^{-c}q^{-\binom{c}{2}}\sum_{a\le n\le b}(-1)^nq^{\binom{n}{2}}z^n
  =\sum_{a-c\le n\le b-c}(-1)^{n}q^{\binom{n}{2}}(zq^c)^{n}.$$
  Since the sum of the two borders of summation satisfies
  $(a-c)+(b-c)\in\{-1,0\}$, just as in the sums appearing in
  Theorem~\ref{mth1} (notice that the sum on the left-hand side of
  \eqref{eqtjs1}
  can be written as one over the range $-k\le n\le k-1$,
  while that on the left-hand side of \eqref{eqtjs2} is one
  over the range $1-k\le n\le k-1$),
we see that the averaged truncation
$$\frac{(-z)^{-c}q^{-\binom{c}{2}}(-1)^{c-a}}{(z,q/z,q;q)_\infty}
\sum_{a\le n\le b}(-1)^nq^{\binom{n}{2}}z^n$$
can also be studied using Theorem~\ref{mth1}.
\end{remark}
We now use Theorem~\ref{mth1} to extend the inequality~\eqref{AMIE}.
We write
\begin{align*}
  \mathcal{J}_{k}^{\rm I}(z,q):=\sum_{n\ge 0}\sum_{m\in\bZ}J_k^{\rm I}(m,n)q^nz^m
  &=\frac{(-1)^{k}}{(z,q/z,q;q)_\infty}\sum_{-k\le \ell\le k+1}
  (-1)^\ell q^{\ell(\ell-1)/2}z^{\ell}-(-1)^{k},\\
\intertext{and}
  \mathcal{J}_{k}^{\rm II}(z,q):=\sum_{n\ge 0}\sum_{m\in\bZ}
  J_k^{\rm II}(m,n)q^nz^m
  &=\frac{(-1)^{k}}{(qz,q/z,q;q)_\infty}
  \sum_{-k\le \ell\le k}(-1)^\ell q^{\ell(\ell-1)/2}z^{\ell}-(-1)^{k}(1-z).
\end{align*}
It is not difficult to see that $\mathcal{J}_{k}^{\rm I}(z,q)$
and $\mathcal{J}_{k}^{\rm II}(z,q)$ equal the $k\mapsto k+1$
cases of the left-hand sides of \eqref{eqtjs1} and \eqref{eqtjs2},
respectively.
By multiplying each of these two generating functions
by the factors $(1-qz)$ or $(1-q/z)$, respectively, and
noticing that the denominators in each term
(of the $k\mapsto k+1$ cases) of the right-hand
sides of \eqref{eqtjs1} and \eqref{eqtjs2} contain those
factors in the denominator, we have, for any $k\ge 0$,
upon comparison of coefficients of $q^nz^m$
the following inequalities.
\begin{corollary}\label{cor:ineq}
  For each $A\in \{{\rm I, II}\}$, we have for all $k\in\bN_0$,
  $n\in\bN$ and $m\in\bZ$ the inequalities
$$J_k^{A}(m,n)\ge 0,\qquad J_k^{A}(m,n)\ge J_k^{A}(m-1,n-1)\quad\mbox{and}\quad
J_k^{A}(m,n)\ge J_k^{A}(m+1,n-1).$$
\end{corollary}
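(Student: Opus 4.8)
The plan is to deduce all three inequalities, for both $A\in\{\mathrm{I},\mathrm{II}\}$ at once, from the product-side expansions supplied by Theorem~\ref{mth1}, by verifying coefficientwise positivity of the right-hand sides. As observed just before the statement, $\mathcal{J}_k^{\mathrm{I}}(z,q)$ and $\mathcal{J}_k^{\mathrm{II}}(z,q)$ are exactly the $k\mapsto k+1$ instances of the left-hand sides of \eqref{eqtjs1} and \eqref{eqtjs2}; hence Theorem~\ref{mth1} rewrites each of them as the corresponding right-hand side, and $J_k^A(m,n)$ becomes the coefficient of $q^nz^m$ in an explicit sum of structured terms. Throughout I would work in the ring of formal Laurent series in $z$ whose coefficients are formal power series in $q$, so that ``non-negativity'' of such a series means non-negativity of every coefficient of $q^nz^m$.

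For the first inequality $J_k^A(m,n)\ge0$ I would check that each summand on the relevant right-hand side has non-negative coefficients. The elementary building blocks are the reciprocals $1/(qz;q)_j$, $1/(q/z;q)_j$, $1/(q/z;q)_\infty$ and $1/(q;q)_j$, each of which expands with non-negative coefficients by Euler's identity $1/(x;q)_\infty=\sum_{i\ge0}x^i/(q;q)_i$ and its finite truncations; the $q$-binomial coefficients $\qbm{2n-1}{n-k-1}_q$, which are polynomials in $q$ with non-negative coefficients; and the factor $\frac{1-z^{2k+1}}{1-z}=\sum_{j=0}^{2k}z^j$. Since each monomial prefactor $z^{\pm(k+1)}q^{\,n+\binom{k+1}{2}}$ merely shifts exponents, every term is a product of such non-negative series, whence $J_k^A(m,n)\ge0$.

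For the two difference inequalities I would exploit the explicit denominators. Multiplying the generating series by $1-qz$ turns the coefficient of $q^nz^m$ into $J_k^A(m,n)-J_k^A(m-1,n-1)$, so the second inequality is equivalent to non-negativity of $(1-qz)\,\mathcal{J}_k^A(z,q)$; likewise the third is equivalent to non-negativity of $(1-q/z)\,\mathcal{J}_k^A(z,q)$. The key tools are the telescopings $\frac{1-qz}{(qz;q)_j}=\frac{1}{(q^2z;q)_{j-1}}$ and $\frac{1-q/z}{(q/z;q)_j}=\frac{1}{(q^2/z;q)_{j-1}}$ for $j\ge1$, together with their infinite-product analogues $\frac{1-qz}{(qz;q)_\infty}=\frac{1}{(q^2z;q)_\infty}$ and $\frac{1-q/z}{(q/z;q)_\infty}=\frac{1}{(q^2/z;q)_\infty}$: the factor $1-qz$ removes one factor from a Pochhammer $(qz;q)_\bullet$ in a denominator, and $1-q/z$ removes one from a $(q/z;q)_\bullet$, in each case leaving a reciprocal Pochhammer that still has non-negative coefficients while all other building blocks remain untouched. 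Comparing coefficients of $q^nz^m$ then yields the two difference inequalities.

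The main obstacle is the bookkeeping at the smallest summand, where a finite Pochhammer can degenerate to the empty product so that no factor is available to cancel. For the infinite denominators $(qz;q)_\infty$ and $(q/z;q)_\infty$ the telescoping always applies, and for $(qz;q)_n,(q/z;q)_n$ with $n\ge k+1\ge1$ there is always at least one factor to remove; the sole delicate case is the lowest-index term $n=k+1$ of the second sum in \eqref{eqtjs2}, whose denominator carries $(qz;q)_{n-1}=(qz;q)_{k}$, which is the empty product precisely when $k=0$. I would therefore isolate that extremal term and verify the associated coefficient comparison directly, the remaining inequalities following uniformly from the telescoping above. This treatment of the smallest summand, where the positivity margin is tightest, is exactly the step that most warrants explicit verification.
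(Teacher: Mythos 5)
Your route is exactly the paper's: identify $\mathcal{J}_k^{\rm I}$, $\mathcal{J}_k^{\rm II}$ with the $k\mapsto k+1$ cases of \eqref{eqtjs1} and \eqref{eqtjs2}, read off the first inequality from termwise non-negativity of the right-hand sides, and get the two difference inequalities by multiplying by $(1-qz)$, resp.\ $(1-q/z)$, and cancelling these factors into the Pochhammer denominators. The paper performs this last step in a single sentence, asserting that every term's denominator contains both factors; your telescoping identities are the precise form of that assertion, and you correctly spotted the one place where it breaks down: in the $k\mapsto k+1$ case of \eqref{eqtjs2}, the term $n=k+1$ of the second sum has denominator $(q;q)_0(qz;q)_k(q/z;q)_\infty$, which for $k=0$ contains no factor $(1-qz)$ at all. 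This is a point the paper's own argument silently glosses over.

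The gap is that the repair you propose --- ``isolate that extremal term and verify the associated coefficient comparison directly'' --- cannot be carried out, because the comparison is false there. For $A={\rm II}$, $k=0$ one has $\mathcal{J}_0^{\rm II}(z,q)=\frac{1}{(qz,q/z,q;q)_\infty}-1+z$, so $J_0^{\rm II}(1,0)=1$ (coming from the artificial term $+z$ at order $q^0$), while $J_0^{\rm II}(2,1)=J_{\textsc{t}}(2,1)=0$, since a partition of $1$ has a single part and so cannot have two more red than green parts. Hence $J_0^{\rm II}(2,1)\ge J_0^{\rm II}(1,0)$ fails; equivalently, $(1-qz)\,\mathcal{J}_0^{\rm II}(z,q)=z+q\,(1+z+z^{-1}-z^2)+O(q^2)$ has coefficient $-1$ at $qz^2$, produced exactly by the term $z(1-qz)/(q/z;q)_\infty$ that you isolated. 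So your argument (like the paper's) does prove the first and third inequalities for all $k\in\bN_0$, and the second inequality for $A={\rm I}$ with all $k\ge0$ and for $A={\rm II}$ with $k\ge1$; but the second inequality for $A={\rm II}$, $k=0$ admits a genuine counterexample, so the defect lies in the statement itself (and in the paper's one-line justification, which tacitly makes the same false claim about the denominators that you flagged), not in your instinct, which located precisely the problematic term. The correct conclusion of your analysis is that this boundary case must be excluded from the corollary, not that it can be verified.
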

\begin{remark}The non-negativity for $J_k^{\rm I}(m,n)$ also follows from
  Wang and Yee~\cite[Theorem~2.3]{MR3937794}, while the non-negativity
  for $J_k^{\rm II}(m,n)$ is completely new.
\end{remark}

Next we define $J_{\textsc{e}}(m,n)$ and $J_{\textsc{t}}(m,n)$ by
\begin{align*}
  \sum_{n\ge 0}\sum_{m\in\bZ}J_{\textsc{e}}(m,n)q^nz^m
  &=\frac{1}{(qz,q^2/z,q^3;q^3)_\infty},\\
\intertext{and}
  \sum_{n\ge 0}\sum_{m\in\bZ}J_{\textsc{t}}(m,n)q^nz^m
  &=\frac{1}{(qz,q/z,q;q)_\infty}.
\end{align*}
It is evident that $J_{\textsc{e}}(m,n)$ refines the number of
unrestricted partitions.
Specifically, $J_{\textsc{e}}(m,n)$ counts the number of
partitions of $n$ for which the difference between the number
of parts congruent to $1$ mod $3$ and the number of parts
congruent to $2$ mod $3$ %in any of those partitions
is equal to $m$. Similarly, $J_{\textsc{t}}(m,n)$ refines
the number of partitions into parts of three different colors.
More precisely,
$J_{\textsc{t}}(m,n)$ counts the number of partitions of $n$
into, say, red, green and blue parts, for which the
difference between the numbers of red and green parts
%in any of those partitions
is equal to $m$.

From the first part of Corollary~\ref{cor:ineq} we readily
deduce the following result:
\begin{corollary}\label{corn}For any $m\in\bZ$, and $n,k\in\bN_0$,
  we have
\begin{align*}
  (-1)^{k}\sum_{-k\le \ell\le k+1}(-1)^\ell
  J_{\textsc{e}}(m-\ell,n-\ell(3\ell-1)/2)&\ge 0,\\
  \intertext{and}
  (-1)^{k}\sum_{-k\le \ell\le k}(-1)^\ell
  J_{\textsc{t}}(m-\ell,n-\ell(\ell-1)/2)&\ge 0.
\end{align*}
\end{corollary}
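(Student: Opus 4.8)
The plan is to read each alternating sum in Corollary~\ref{corn} off as a single coefficient $[q^nz^m]$ of one of the two generating functions $\mathcal{J}_k^{\rm I}$ and $\mathcal{J}_k^{\rm II}$ defined above, and then to quote the non-negativity $J_k^A(m,n)\ge 0$ supplied by the first part of Corollary~\ref{cor:ineq}. For the series $J_{\textsc{t}}$ this match is immediate, since the product $(qz,q/z,q;q)_\infty$ in its generating function is exactly the one occurring in $\mathcal{J}_k^{\rm II}$. For the series $J_{\textsc{e}}$ the match is not direct; it is produced from $\mathcal{J}_k^{\rm I}$ by the substitution $q\mapsto q^3$, $z\mapsto qz$, which is the same specialization (at the level of Jacobi's triple product) that turns \eqref{JTP} into the pentagonal number theorem \eqref{PNT}.

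Concretely, for $J_{\textsc{t}}$ I would rewrite the definition of $\mathcal{J}_k^{\rm II}$ as
\begin{equation*}
\mathcal{J}_k^{\rm II}(z,q)+(-1)^k(1-z)=(-1)^k\Bigg(\sum_{-k\le\ell\le k}(-1)^\ell q^{\ell(\ell-1)/2}z^\ell\Bigg)\sum_{n\ge 0}\sum_{m\in\bZ}J_{\textsc{t}}(m,n)q^nz^m,
\end{equation*}
and extract the coefficient of $q^nz^m$ for $n\ge 1$. The subtracted term $(-1)^k(1-z)$ lives entirely in $q$-degree $0$, so it drops out, and Cauchy multiplication of the two series on the right gives
\begin{equation*}
J_k^{\rm II}(m,n)=(-1)^k\sum_{-k\le\ell\le k}(-1)^\ell J_{\textsc{t}}(m-\ell,n-\ell(\ell-1)/2).
\end{equation*}
The right-hand side is the quantity in the second inequality of Corollary~\ref{corn}, and the left-hand side is $\ge 0$ by Corollary~\ref{cor:ineq}.

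For $J_{\textsc{e}}$ I would apply $q\mapsto q^3$, $z\mapsto qz$ to the corresponding rearrangement of $\mathcal{J}_k^{\rm I}$. Under this substitution the product transforms as $(z,q/z,q;q)_\infty\mapsto(qz,q^2/z,q^3;q^3)_\infty$, which is precisely the product defining $J_{\textsc{e}}$, and each theta monomial transforms as $q^{\ell(\ell-1)/2}z^\ell\mapsto q^{3\ell(\ell-1)/2+\ell}z^\ell=q^{\ell(3\ell-1)/2}z^\ell$, producing the generalized pentagonal exponents $\ell(3\ell-1)/2$. This yields
\begin{equation*}
\mathcal{J}_k^{\rm I}(qz,q^3)+(-1)^k=(-1)^k\Bigg(\sum_{-k\le\ell\le k+1}(-1)^\ell q^{\ell(3\ell-1)/2}z^\ell\Bigg)\sum_{n\ge 0}\sum_{m\in\bZ}J_{\textsc{e}}(m,n)q^nz^m,
\end{equation*}
so that for $n\ge 1$ the coefficient of $q^nz^m$ on the right is exactly $(-1)^k\sum_{-k\le\ell\le k+1}(-1)^\ell J_{\textsc{e}}(m-\ell,n-\ell(3\ell-1)/2)$, the quantity in the first inequality.

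The only non-formal step---and the one I would treat as the main obstacle---is to see that this last coefficient is non-negative. Writing $\mathcal{J}_k^{\rm I}(z,q)=\sum_{n,m}J_k^{\rm I}(m,n)q^nz^m$, Corollary~\ref{cor:ineq} gives $J_k^{\rm I}(m,n)\ge 0$ for $n\ge 1$, while the subtracted constant $(-1)^k$ in the definition of $\mathcal{J}_k^{\rm I}$ cancels its $q$-degree-$0$ part, so $J_k^{\rm I}(m,0)=0$; hence every coefficient of $\mathcal{J}_k^{\rm I}$ is non-negative. The substitution sends the monomial $q^nz^m$ to $q^{3n+m}z^m$, i.e.\ it is a monomial substitution with non-negative exponents, and therefore carries a series with non-negative coefficients to a series with non-negative coefficients. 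Consequently every coefficient of $\mathcal{J}_k^{\rm I}(qz,q^3)$ is $\ge 0$, which establishes the first inequality for $n\ge 1$. Thus the essential work is (i) identifying the substitution $q\mapsto q^3$, $z\mapsto qz$ that simultaneously produces the $J_{\textsc{e}}$ product and the pentagonal exponents, and (ii) verifying that the induced regrading $q^nz^m\mapsto q^{3n+m}z^m$ preserves positivity of coefficients; everything else is a coefficient comparison, with the subtracted constants affecting only the boundary degree $n=0$.
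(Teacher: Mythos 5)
Your proposal is correct and follows essentially the same route as the paper: both inequalities are deduced from the first part of Corollary~\ref{cor:ineq}, the $J_{\textsc{t}}$ case read off directly as coefficients of $\mathcal{J}_k^{\rm II}$, and the $J_{\textsc{e}}$ case obtained from $\mathcal{J}_k^{\rm I}$ via the substitution $q\mapsto q^3$, $z\mapsto qz$ (the same specialization that turns \eqref{JTP} into \eqref{PNT}), which is precisely what the paper leaves implicit in its ``readily deduce''. Your restriction to $n\ge 1$ is in fact the honest range: Corollary~\ref{cor:ineq} is stated for $n\in\bN$, and at $n=0$ the claimed inequalities can fail (the first sum equals $(-1)^k$ at $m=n=0$), so the ``$n,k\in\bN_0$'' in the statement should be read with $n\in\bN$.
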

\begin{remark} The inequality for $J_{\textsc{e}}(m,n)$ also follows
  from Wang and Yee \cite[Theorem 2.3]{MR3937794},
  while the inequality for $J_{\textsc{t}}(m,n)$ is completely new.
\end{remark}
\begin{remark}
Define $J_{\textsc{g}}(m,n)$ by
$$\sum_{n\ge 0}\sum_{m\in\bZ}J_{\textsc{g}}(m,n)q^nz^m
=\frac{1}{(qz,q/z,q^2;q^2)_\infty}.$$
The number $J_{\textsc{g}}(m,n)$ can be considered to be a
refinement of $\bar{p}(n)$, the number of overpartitions of $n$,
whose generating function is (cf.\ \cite{{MR2034322}})
$$\sum_{n\ge 0}\bar{p}(n)q^n=
\frac{1}{(q,q,q^2;q^2)_\infty}=\frac{(-q;q)_\infty}{(q;q)_\infty}.$$
On the other hand, $J_{\textsc{g}}(m,n)$ can also be considered,
similarly to  $J_{\textsc{t}}(m,n)$, to be a refinement of
three-color partitions.
Specifically, $J_{\textsc{g}}(m,n)$ counts the number of
partitions of $n$ into parts of three colors, say, red, green and blue,
such that the numbers of red and green parts are odd, the number of blue
parts is even, and the difference between the numbers
of red and green parts is equal to $m$.
%(Starting with any given overpartition $\pi$, we can color its parts
%using three colors as follows: the overlined parts together form
%a partition into distinct parts and can be transformed, by Euler's
%``distinct--odd'' partition theorem, one-to-one into a partition
%having odd parts; these are to be assigned the color red.
%The non-overlined odd parts of $\pi$ are assigned the color green,
%and the non-overlined even parts of $\pi$ the color blue.)
%Combining the generating functions of $J_{\textsc{g}}(m,n)$
%and $J_{\textsc{e}}(m,h)$, we deduce, for all $n,h\in\bN_0$, the equality
%\begin{equation}
%  J_{\textsc{g}}(m,n)=\sum_{\substack{h\ge 0\\ 2h+m=3n}}J_{\textsc{e}}(m,h).
%\end{equation}
%\begin{equation*}
%  J_{\textsc{g}}(3n-2h,n)= J_{\textsc{e}}(3n-2h,h).
%\end{equation*}
%This relation can be used in combination with
%the first inequality in Corollary \ref{corn} for $J_{\textsc{e}}(m,n)$

Just as in the proof of the two inequalities in Corollary~\ref{corn}
we can use the first part of Corollary~\ref{cor:ineq}
to establish the following inequality for
$J_{\textsc{g}}(m,n)$:
\begin{equation*}
(-1)^{k}\sum_{-k\le \ell\le k+1}(-1)^\ell
  J_{\textsc{g}}(m-\ell,n-\ell^2)\ge 0,
\end{equation*}
for any $m\in\bZ$, and $n,k\in\bN_0$.
\end{remark}

Finally, for any $k\in\bN_0$, define $J_k^{\textsc{t}}(m,n)$ by
$$\sum_{n\ge 0}\sum_{m\in\bZ}J_k^{\textsc{t}}(m,n)q^nz^m
=\frac{(-1)^k}{(z,q/z,q;q)_\infty}\sum_{0\le \ell\le k}(-1)^\ell
q^{\ell (\ell+1)/2}z^{-\ell}(1-z^{2\ell+1}).$$
%and
%$$\sum_{n\ge 0}\sum_{m\in\bZ}J_k^{\textsc{g}}(m,n)q^nz^m=\frac{(-1)^k}{(qz,q/z,q^2;q^2)_\infty}\sum_{-k\le n\le k}(-1)^\ell q^{\ell^2}z^{\ell}.$$
Notice that $J_k^{\textsc{t}}(m,n)$
%and $J_k^{\textsc{g}}(m,n)$ are
is symmetric in $m$, that is $J_k^{\textsc{t}}(m,n)=J_k^{\textsc{t}}(-m,n)$
%and $J_k^{\textsc{g}}(m,n)=J_k^{\textsc{g}}(-m,n)$
for all $m,n$. Recall that a \emph{unimodal sequence} is a finite sequence
of real numbers that first increases and then decreases.
We have the following conjecture, supported by numerical evidence:
\begin{conjecture}
  For any $k\in\bN_0$ and $n\in\bN$ the sequence
  $\big(J_k^{\textsc{t}}(m,n)\big)_{-n\le m\le n}$
%and $\big(J_k^{\textsc{g}}(m,n)\big)_{-n\le m\le n}$
is unimodal.
\end{conjecture}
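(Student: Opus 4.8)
The plan is to fix an integer $n\ge 1$ and prove that the palindromic Laurent polynomial $f_{k,n}(z):=\sum_{m\in\bZ}J_k^{\textsc{t}}(m,n)z^m$ is unimodal. Since the paper already records that $J_k^{\textsc{t}}(m,n)=J_k^{\textsc{t}}(-m,n)$, unimodality is equivalent to the one-sided monotonicity
$$D_k(m,n):=J_k^{\textsc{t}}(m,n)-J_k^{\textsc{t}}(m+1,n)\ge 0\qquad\text{for all }m\ge 0.$$
Writing the defining generating function as $G_k(z):=\sum_{n\ge 0}\sum_{m\in\bZ}J_k^{\textsc{t}}(m,n)q^nz^m$ and observing that $z^{-1}G_k(z)=\sum_{m,n}J_k^{\textsc{t}}(m+1,n)q^nz^m$, the entire conjecture collapses to the single assertion that every coefficient $[q^nz^m]\,(1-z^{-1})G_k(z)$ with $m\ge 0$ and $n\ge 1$ is non-negative. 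I would work exclusively with this one auxiliary series.

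The next step is to simplify $(1-z^{-1})G_k(z)$. Collapsing $z^{-\ell}(1-z^{2\ell+1})$ and reindexing shows that the truncated theta sum in the definition of $G_k$ equals $P_k(z):=\sum_{-k\le \nu\le k+1}(-1)^\nu q^{\binom\nu2}z^\nu$, and the involution $\nu\leftrightarrow 1-\nu$, which fixes $\binom\nu2$ and reverses the sign, yields $P_k(z)=-zP_k(1/z)$ (this, with the $z\mapsto 1/z$ invariance of $(qz,q/z,q;q)_\infty$, re-explains the palindromicity of $f_{k,n}$). Using $(z;q)_\infty=(1-z)(qz;q)_\infty$ and the elementary identity $(1-z^{-1})/(1-z)=-z^{-1}$, a short computation gives
$$(1-z^{-1})G_k(z)=\frac{(-1)^{k+1}z^{-1}P_k(z)}{(qz,q/z,q;q)_\infty}
=(-1)^{k+1}\sum_{-k\le\nu\le k+1}(-1)^\nu q^{\binom\nu2}\frac{z^{\nu-1}}{(qz,q/z,q;q)_\infty}.$$
Since $\frac1{(qz,q/z,q;q)_\infty}=\sum_{m,n}J_{\textsc{t}}(m,n)q^nz^m$ is the three-color partition series, extracting coefficients turns the target into
$$D_k(m,n)=(-1)^{k+1}\sum_{-k\le\nu\le k+1}(-1)^\nu J_{\textsc{t}}\big(m-\nu+1,\,n-\tbinom\nu2\big)\ge 0\qquad(m\ge 0),$$
an alternating sum of three-color partition numbers of exactly the shape appearing in Corollary~\ref{corn}.

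This reformulation suggests mirroring the derivation of Corollaries~\ref{cor:ineq} and~\ref{corn}: I would seek a closed, manifestly non-negative expansion of $(1-z^{-1})G_k(z)$ restricted to the half-range $m\ge 0$, analogous to the right-hand side of \eqref{eqtjs1}, and then read off $D_k(m,n)\ge 0$ by comparing coefficients of $q^nz^m$. A natural first target is the base case $k=0$, where the inequality is the unimodality of the three-color refinement $J_{\textsc{t}}(\cdot,n)$ itself; via $\frac1{(qz;q)_\infty(q/z;q)_\infty}=\sum_{a,b\ge 0}q^{a+b}z^{a-b}/\big((q;q)_a(q;q)_b\big)$ this reduces to the coefficientwise $q$-series monotonicity of $\sum_{b\ge 0}q^{m+2b}/\big((q;q)_{m+b}(q;q)_b\big)$ in $m$, which I expect to be provable directly and which would serve as the engine for an induction or convolution step in $k$.

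The main obstacle is structural. Every inequality obtained so far, including those in Corollaries~\ref{cor:ineq} and~\ref{corn}, arises by multiplying the generating function by a \emph{$q$-shifted} factor $(1-q^az^{\pm1})$ that already occurs in the denominators $(q/z,qz;q)_n$, $(qz;q)_n$ on the right-hand side of \eqref{eqtjs1}; such a factor cancels termwise and leaves an expansion with non-negative coefficients, so positivity is immediate. Unimodality, by contrast, forces multiplication by the \emph{non-$q$-shifted} factor $(1-z^{-1})$, which the expansion of \eqref{eqtjs1} does not absorb: no denominator contains $(1-z^{-1})$, so after multiplication one is left with a genuinely signed sum, the $(-1)^\nu q^{\binom\nu2}$ above, and there is no termwise non-negativity to exploit. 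Controlling this cancellation — equivalently, producing the missing positive expansion after multiplication by $(1-z^{-1})$, or a weight- and $n$-preserving injection realizing $D_k(m,n)$ as an honest count — is where the difficulty is concentrated, and is presumably the reason the statement has so far resisted proof.
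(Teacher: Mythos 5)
First, a point of calibration: the paper does not prove this statement at all --- it is explicitly left as a conjecture ``supported by numerical evidence.'' So your submission cannot be measured against a paper proof; it must stand on its own as a complete argument, and it does not. Your reformulation steps are correct and worth recording: the truncated theta sum does equal $P_k(z)=\sum_{-k\le\nu\le k+1}(-1)^\nu q^{\binom{\nu}{2}}z^\nu$, the involution $\nu\mapsto 1-\nu$ does give $P_k(z)=-zP_k(1/z)$ and hence the symmetry $J_k^{\textsc{t}}(m,n)=J_k^{\textsc{t}}(-m,n)$, and for a symmetric sequence unimodality is indeed equivalent to $D_k(m,n)=J_k^{\textsc{t}}(m,n)-J_k^{\textsc{t}}(m+1,n)\ge 0$ for $m\ge 0$, which your computation correctly rewrites as $(-1)^{k+1}\sum_{-k\le\nu\le k+1}(-1)^\nu J_{\textsc{t}}\big(m-\nu+1,n-\binom{\nu}{2}\big)\ge 0$. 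But this is a restatement of the conjecture, not progress toward it: the entire content is the positivity of this alternating sum, and you acknowledge in your final paragraph that you cannot establish it. Your diagnosis of \emph{why} the paper's method fails here --- the factor $(1-z^{-1})$ is not of the form $(1-q^az^{\pm 1})$ and so is not absorbed by any denominator in \eqref{eqtjs1} --- is accurate, but identifying the obstacle is not the same as overcoming it.

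Moreover, the one concrete step you propose as the ``engine'' is false. For the base case $k=0$ you claim the problem ``reduces to'' the coefficientwise monotonicity in $m$ of $\sum_{b\ge 0}q^{m+2b}/\big((q;q)_{m+b}(q;q)_b\big)$, i.e., unimodality in $z$ of the coefficients of $1/\big((qz;q)_\infty(q/z;q)_\infty\big)$. This is only a sufficient condition (you have dropped the factor $1/(q;q)_\infty$), and it fails at the very first order: the coefficient of $q^1$ in that product is $z+z^{-1}$, so the $z$-coefficient sequence at $n=1$ is $(1,0,1)$, not unimodal; equivalently, your sum has $q$-coefficient $0$ at $m=0$ but $1$ at $m=1$. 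The smoothing supplied by the extra factor $1/(q;q)_\infty$ (at $n=1$ it turns $(1,0,1)$ into $(1,1,1)$, and at $n=2$ into $(1,2,3,2,1)$) is essential, so any viable attack must treat the full triple product $1/(qz,q/z,q;q)_\infty$ at once, and the induction-on-$k$ scheme you sketch has no valid base case. The conjecture remains open after your proposal.
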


\smallskip
\subsection{Averaged truncation of the $q$-Gau{\ss} summation and of
  the very-well-poised ${}_5\phi_5$ summation}~
Heine's $q$-analogue of the Gau{\ss} summation formula
(see \cite[Appendix~(II.8)]{MR2128719}) can be written as
\begin{equation}\label{eq:qGauss}
  \frac{(az,bz;q)_\infty}{(abz,z;q)_\infty}
  =\sum_{n\ge 0}\frac{(a,b;q)_nz^n}{(abz,q;q)_n},
\end{equation}
where $|z|<1$, for convergence. We list some special cases of this identity
we are interested in. In the limit $b\to 0$ the $q$-Gau{\ss} summation
\eqref{eq:qGauss}
is reduced to the well-known non-terminating $q$-binomial theorem
(see \cite[Appendix~(II.3)]{MR2128719}),
\begin{equation}\label{eq:ntqbin}
\frac{(az;q)_\infty}{(z;q)_\infty}
=\sum_{n\ge 0}\frac{(a;q)_nz^n}{(q;q)_n},
\end{equation}
where $|z|<1$.
The further limit $a\to 0$ gives the expansion of
a variant of the $q$-exponential function
\begin{equation*}
\frac{1}{(z;q)_\infty}
=\sum_{n\ge 0}\frac{z^n}{(q;q)_n},
\end{equation*}
where $|z|<1$.
Instead, replacing $z$ by $z/a$ in \eqref{eq:ntqbin},
followed by letting $a\to\infty$, one obtains the following
expansion of a second variant of a $q$-exponential function,
\begin{equation*}
  (z;q)_\infty=\sum_{n\ge 0}\frac{q^{\binom{n}{2}}(-z)^n}{(q;q)_n}.
\end{equation*}
Finally, letting $z\mapsto z/ab$
in \eqref{eq:qGauss}, followed by letting $a\to\infty$ and $b\to\infty$,
one obtains the following ${}_0\phi_1$ summation
which provides a different expansion for the first $q$-exponential function:
\begin{equation*}
\frac{1}{(z;q)_\infty}=\sum_{n\ge 0}\frac{q^{n(n-1)}z^n}{(z,q;q)_n}.
\end{equation*}

We prove the following averaged truncation of the
$q$-Gau{\ss} summation, a result that is
analogous to the averaged truncations of the Jacobi triple product
identity presented and discussed in Subsection~\ref{subsec:jtpi}.
\begin{theorem}\label{mth2}We have
\begin{align*}
  &\frac{(-abz,-z;q)_\infty}{(az,bz;q)_\infty}
    \sum_{0\le n< k}\frac{(-a,-b;q)_n(-z)^n}{(-abz,q;q)_n}\\
  &=1-(-1)^k(-a;q)_k\sum_{h, \ell\ge 0}\frac{(-b;q)_{\ell+k}(-q/b;q)_{h}
    a^{\ell}b^hz^{h+\ell+k}}{(q;q)_{h+\ell+k}}\qbm{h+\ell+k-1}{k-1}_q
    \qbm{h+\ell}{\ell}_q.
\end{align*}
\end{theorem}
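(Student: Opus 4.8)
The plan is to first collapse the full series and then expand the complementary tail. Replacing $(a,b,z)\mapsto(-a,-b,-z)$ in the $q$-Gau{\ss} summation \eqref{eq:qGauss} gives
\[
\frac{(az,bz;q)_\infty}{(-abz,-z;q)_\infty}=\sum_{n\ge0}\frac{(-a,-b;q)_n(-z)^n}{(-abz,q;q)_n},
\]
valid for small $|z|$ (note $|-z|=|z|<1$), so the \emph{complete} averaged series equals $1$. Consequently the stated identity is equivalent to the tail evaluation
\[
\frac{(-abz,-z;q)_\infty}{(az,bz;q)_\infty}\sum_{n\ge k}\frac{(-a,-b;q)_n(-z)^n}{(-abz,q;q)_n}
=(-1)^k(-a;q)_k\sum_{h,\ell\ge0}(\cdots),
\]
the right-hand side being the double sum of the statement. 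I would work throughout in the ring of formal power series in $z$, where all rearrangements below are legitimate.

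Next I would clear the $z$-dependent denominator from the tail. Using $(-abz;q)_\infty=(-abz;q)_n(-abzq^n;q)_\infty$ one gets $\frac{(-abz,-z;q)_\infty}{(az,bz;q)_\infty}\cdot\frac1{(-abz;q)_n}=\frac{(-abzq^n;q)_\infty(-z;q)_\infty}{(az,bz;q)_\infty}$, and I would expand the two surviving ratios of infinite products by the non-terminating $q$-binomial theorem \eqref{eq:ntqbin}: the factor $(-abzq^n;q)_\infty/(az;q)_\infty$ produces $\sum_{\ell\ge0}(-bq^n;q)_\ell(az)^\ell/(q;q)_\ell$, and, writing $(-z;q)_\infty=(1+z)(-qz;q)_\infty$, the factor $(-qz;q)_\infty/(bz;q)_\infty$ produces $\sum_{h\ge0}(-q/b;q)_h(bz)^h/(q;q)_h$; this is exactly how the parameter $-q/b$ of the statement enters. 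Collapsing $(-b;q)_n(-bq^n;q)_\ell=(-b;q)_{n+\ell}$ then turns the tail into an explicit triple sum over $n\ge k$ and $\ell,h\ge0$ whose summand carries precisely the factorials $(-a;q)_n$, $(-b;q)_{n+\ell}$ and $(-q/b;q)_h$, together with the prefactor $(1+z)$. The key engine for the inner summation is the terminating $q$-Chu--Vandermonde evaluation
\[
\sum_{n=0}^{M}\qbm{M}{n}_q(-a;q)_n(-1/a)^n=(-a)^{-M},
\]
which is precisely the mechanism underlying the complete-series reduction above (note that the resulting contribution $(-1)^M/(q;q)_M$ is free of $a$).

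Finally I would reorganize the triple sum by the total $z$-degree $N=n+\ell+h$. Fixing $N$ and $h$ makes $(-b;q)_{n+\ell}=(-b;q)_{N-h}$ constant in $n$, and the inner sum becomes the \emph{partial} sum $\sum_{n=k}^{N-h}\qbm{N-h}{n}_q(-a;q)_n(-1/a)^n$. Because the full such sum collapses to a monomial but the partial one does not, the surviving $a$-dependence is carried by the complementary head, and it is this truncation — after re-summing over $h$ (with the factor $1+z$ absorbed) and relabelling $\ell\mapsto N-h-k$ — that forces the answer to stay a double sum and simultaneously produces the isolated factor $(-a;q)_k$ together with the binomials $\qbm{h+\ell+k-1}{k-1}_q$ and $\qbm{h+\ell}{\ell}_q$; here the elementary series $\sum_{n\ge k}\qbm{n-1}{k-1}_qx^n=x^k/(x;q)_k$ is the relevant bookkeeping device for the first $q$-binomial coefficient.

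The main obstacle is precisely this last reorganization: one must track the $(1+z)$ prefactor through the degree shift, convert the partial $q$-Chu--Vandermonde sum correctly, and reconcile the differing ranges of the indices, all while keeping the signs and powers of $a,b,z$ aligned — small numerical checks (e.g.\ $k=1$, where the identity reduces to a clean reindexing of the double $q$-binomial expansion of $(-abz,-z;q)_\infty/(az,bz;q)_\infty$) show that no single summation collapses to a product, so the combinatorics cannot be shortcut. As a safeguard I would keep in reserve the alternative of proving the result by induction on $k$, matching the tail increment $\frac1P\,\frac{(-a,-b;q)_k(-z)^k}{(-abz,q;q)_k}$ against the corresponding difference of the claimed double sums via the Pascal-type recurrence for $\qbm{\cdot}{\cdot}_q$.
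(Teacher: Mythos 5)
Your first two steps are correct and check out: by the $q$-Gau{\ss} summation \eqref{eq:qGauss} (with $a,b,z$ negated) the complete averaged series is $1$, and the tail does expand, after clearing $(-abz;q)_n$ and applying the $q$-binomial theorem \eqref{eq:ntqbin} twice, into the triple sum you describe. The genuine gap is the last step, and it is not a bookkeeping issue: it is the entire content of the theorem. In your triple sum the index $n$ still carries the factor $(-a;q)_n/(q;q)_n$, so after fixing the total degree and $h$ the inner sum is a \emph{truncated} $q$-Chu--Vandermonde sum $\sum_{n=k}^{M}\qbm{M}{n}_q(-a;q)_n(-1/a)^n$, which, unlike the complete sum, has no closed form. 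Your proposed mechanism --- write it as (complete sum) minus (head), let the complete part collapse to a monomial, and re-sum the head over $h$ and the degree --- is circular: the collapsed complete part reassembles (via the two $q$-binomial expansions run backwards) into the complete averaged series $1$, and the re-summed head reassembles into the averaged head, so one recovers only the tautology ``tail $=1-$ head'', which is where you started. No mechanism in the proposal actually produces the isolated factor $(-a;q)_k$ or the two $q$-binomial coefficients; already for $k=1$ the identity left over after your manipulations \emph{is} the $k=1$ case of the theorem (one can prove that case by extracting coefficients of powers of $a$ and using both Pascal recurrences, but nothing in your write-up does this, let alone for general $k$), and the series $\sum_{n\ge k}\qbm{n-1}{k-1}_qx^n=x^k/(x;q)_k$ plays no role in closing this hole. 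The fallback induction on $k$ is likewise only named, not executed.

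The missing idea --- and what the paper supplies --- is a transformation that makes the tail geometrically summable \emph{before} expanding. After shifting the tail to start at $n=0$, the paper applies the symmetry \eqref{eqm1} (a limiting case of Watson's ${}_8\phi_7$ transformation, proved as a lemma) with $t=q^k$, which in the unsigned variables gives
\begin{equation*}
\sum_{n\ge k}\frac{(a,b;q)_n}{(abz,q;q)_n}z^n
=\frac{(a,b;q)_kz^k}{(abz,q;q)_k}\,\frac{1-q^k}{1-z}\,
\sum_{n\ge 0}\frac{(az,bz;q)_n}{(abzq^k,qz;q)_n}q^{kn},
\end{equation*}
so that the new summation index enters only through $q^{kn}$. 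After the same kind of double $q$-binomial expansion you use, the $n$-sum is then a plain geometric series, producing the factor $1/(1-q^{h_1+h_2+k})$, and the prefactor $(a;q)_k$ together with $\qbm{h+\ell+k-1}{k-1}_q\qbm{h+\ell}{\ell}_q$ follows by genuinely routine manipulation of $q$-factorials; the substitutions $a\mapsto-a$, $b\mapsto-b$, $z\mapsto-z$ finish the proof. To salvage your route you would need either this lemma (or an equivalent device that removes the $n$-dependence from $(-a;q)_n$ in the tail), or a complete proof of the residual head identity, e.g.\ by carrying out your induction on $k$ in full.
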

Special cases of Theorem~\ref{mth2} (obtained by either letting
$a\to 0$; or $a\to 0$ and $b\to 0$; or $a\to 0$ and
$z\mapsto z/b$ followed by $b\to\infty$;
or $z\mapsto z/ab$ followed by $a\to\infty$ and $b\to\infty$)
give averaged truncations of the non-terminating $q$-binomial theorem,
and of the above three $q$-exponential expansions.
\begin{corollary}\label{cor11}
We have the following averaged truncations:
\begin{align*}
  \frac{(-z;q)_\infty}{(bz;q)_\infty}
  \sum_{0\le n< k}\frac{(-b;q)_n(-z)^n}{(q;q)_n}
  &=1-(-1)^k(-b;q)_k\sum_{h\ge 0}\frac{(-q/b;q)_{h}b^hz^{h+k}}{(q;q)_{h+k}}
    \qbm{h+k-1}{k-1}_q,\\
%  \item (Let $a\mapsto 0$ and then let $b\mapsto 0$ in Theorem \ref{mth2})
  (-z;q)_\infty\sum_{0\le n< k}\frac{(-z)^n}{(q;q)_n}
  &=1-(-1)^k\sum_{h\ge 0}\frac{q^{\binom{h+1}{2}}z^{h+k}}{(q;q)_{h+k}}
    \qbm{h+k-1}{k-1}_q,\\
    % \item (Let $a\mapsto 0, b\mapsto b/z$ and
  %then let $z\mapsto 0$ in Theorem \ref{mth2})
  \frac{1}{(z;q)_\infty}\sum_{0\le n<
  k}\frac{q^{\binom{n}{2}}(-z)^n}{(q;q)_n}
  &=1-(-1)^{k}\sum_{h\ge 0}\frac{q^{\binom{k}{2}}z^{h+k}}{(q;q)_{h+k}}
    \qbm{h+k-1}{k-1}_q,\\
    % \item (Let $a\mapsto a/\sqrt{z}, b\mapsto b/\sqrt{z},
  %z\mapsto 0$ and let $t=ab$ in Theorem \ref{mth1})
  (-z;q)_\infty\sum_{0\le n< k}\frac{q^{n(n-1)}(-z)^n}{(-z,q;q)_n}
  &=1-(-1)^k\sum_{\ell\ge k}
    \frac{q^{\binom{k}{2}+\binom{\ell}{2}}z^{\ell}}{(q;q)_{\ell}}
    \qbm{\ell-1}{k-1}_q.
\end{align*}
\end{corollary}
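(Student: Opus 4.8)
The plan is to obtain all four identities as specializations or confluent limits of Theorem~\ref{mth2}, reading off each one by inserting the indicated parameter values and then passing to the stated limits term by term, both in the infinite product on the left and in the (double or single) series on the right. The only analytic input I need beyond Theorem~\ref{mth2} is the elementary asymptotics of the $q$-shifted factorial: the degeneration $(c;q)_n\big|_{c=0}=1$ and $(c;q)_\infty\big|_{c=0}=1$; the large-argument behaviour $\lim_{c\to\infty}c^{-n}(-c;q)_n=q^{\binom n2}$, obtained by factoring the dominant $cq^j$ out of each of the $n$ factors; and the small-argument behaviour $\lim_{c\to 0}c^{h}(-q/c;q)_h=q^{\binom{h+1}{2}}$ together with $\lim_{c\to\infty}(-q/c;q)_h=1$. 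Each identity then follows by substituting these into the general formula and simplifying. Note that the truncation bound $k$ is untouched by all of these limits, so only the summands change.

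For the first identity I would set $a=0$ directly in Theorem~\ref{mth2}. On the left the factors $(-abz;q)_\infty$, $(az;q)_\infty$, $(-a;q)_n$ and $(-abz;q)_n$ all collapse via $(0;q)_m=1$, leaving the averaged truncation of the non-terminating $q$-binomial theorem. On the right the factor $a^\ell$ kills every term except $\ell=0$, while $(-a;q)_k\to1$ and $\qbm{h}{0}_q=1$, so the double sum reduces to the displayed single sum over $h$. The second identity comes from the first by letting $b\to0$; the one delicate point is the product $(-q/b;q)_h\,b^h$, which I rewrite using $\lim_{b\to0}b^h(-q/b;q)_h=q^{\binom{h+1}{2}}$ to produce the Gaussian factor. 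The third identity comes from the first by replacing $z\mapsto z/b$ and letting $b\to\infty$: on the left $(-b;q)_n(-z/b)^n\to q^{\binom n2}(-z)^n$ and $(-z/b;q)_\infty\to1$, while on the right the combination $(-b;q)_k\,b^h(z/b)^{h+k}\,(-q/b;q)_h$ tends to $q^{\binom k2}z^{h+k}$ after the powers of $b$ cancel.

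The fourth identity is the main obstacle, since it is a genuine double confluent limit of the full Theorem~\ref{mth2} rather than a reduction of an already-specialized formula. Here I would first substitute $z\mapsto z/ab$ and then let $a\to\infty$ and $b\to\infty$. On the left, $(-a,-b;q)_n(-z/ab)^n\to q^{n(n-1)}(-z)^n$ (each of $(-a;q)_n$, $(-b;q)_n$ contributes a factor $q^{\binom n2}$, and the surplus powers of $ab$ cancel against $(-z/ab)^n$), while the remaining products all tend to $1$, yielding the stated left-hand side. On the right the crucial bookkeeping is the net power of $a$ and $b$: collecting $(-a;q)_k\sim a^k q^{\binom k2}$, $(-b;q)_{\ell+k}\sim b^{\ell+k}q^{\binom{\ell+k}2}$, $(-q/b;q)_h\to1$, and $a^\ell b^h(z/ab)^{h+\ell+k}=z^{h+\ell+k}a^{-h-k}b^{-\ell-k}$, one finds the product behaves like $q^{\binom k2+\binom{\ell+k}2}z^{h+\ell+k}a^{-h}$. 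Hence as $a\to\infty$ only the $h=0$ term survives, $\qbm{\ell}{\ell}_q=1$ drops out, and after the reindexing $\ell+k\mapsto\ell$ the surviving single sum matches the fourth identity exactly. The conceptual content is precisely this cancellation of powers forcing $h=0$.

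The one genuinely non-routine point is justifying the term-by-term passage to the limit inside the infinite series on the right-hand side. I would handle this by fixing $q$ with $0<|q|<1$ and observing that the general term of each series carries a factor $q^{\binom k2+(\text{linear in }h,\ell)}$ together with $1/(q;q)_{h+\ell+k}$, so that on the relevant ranges of the parameters the series converge uniformly and dominated convergence applies (equivalently, one may view each side as a convergent power series in $z$ with coefficients depending continuously on $a,b$, which may be limited coefficientwise). Once this is in place, the four identities follow from the computations above, requiring no input beyond Theorem~\ref{mth2} and the $q$-factorial asymptotics recorded at the outset.
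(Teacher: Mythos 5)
Your proposal is correct and takes essentially the same route as the paper: the authors derive Corollary~\ref{cor11} from Theorem~\ref{mth2} via exactly the specializations you use (namely $a\to 0$; then $b\to 0$; then $a\to 0$ with $z\mapsto z/b$ and $b\to\infty$; then $z\mapsto z/ab$ with $a\to\infty$ and $b\to\infty$), stating this prescription without working out the details. Your computations filling in those details --- in particular the limits $c^{-n}(-c;q)_n\to q^{\binom{n}{2}}$, $c^h(-q/c;q)_h\to q^{\binom{h+1}{2}}$, and the collapse to $h=0$ followed by the reindexing $\ell+k\mapsto\ell$ in the fourth identity --- are all accurate.
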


For comparison with the above results, we investigate the averaged
truncation of the very-well-poised ${_5}\phi_5$ summation
(see \cite[Ex.~2.22, 2nd Equation]{MR2128719}) that can be stated as
\begin{equation}\label{eq:5phi5}
\frac{(az,bz;q)_\infty}{(abz,qz;q)_\infty}
  =\sum_{n\ge 0}\frac{(1-azq^{2n})(q/b,a,az;q)_n}{(abz,qz,q;q)_n}
  q^{\binom{n}{2}}(-bz)^n.
\end{equation}
The infinite product on the left-hand side is that of the infinite
product in the $q$-Gau{\ss} summation in \eqref{eq:qGauss}
times the factor $(1-z)$. Taking the $b\to 0$ limit in \eqref{eq:5phi5}
we get the Rogers--Fine identity (see
\cite[Equation~(14.2), p.~15]{MR0956465}):
\begin{equation}\label{eq:RF}
\frac{(az;q)_\infty}{(qz;q)_\infty}
=\sum_{n\ge 0}\frac{(1-azq^{2n})(a,az;q)_n}{(qz,q;q)_n}q^{n^2}z^n.
\end{equation}
Multiplying both sides of \eqref{eq:RF} by
${(qz,q;q)_\infty}/{(a,az;q)_\infty}$, we obtain
\begin{equation}\label{eq002}
\frac{(q;q)_\infty}{(a;q)_\infty}
  =\sum_{n\ge 0}\frac{(q^{n+1}z,q^{n+1};q)_\infty}{(aq^n,azq^{n};q)_\infty}
  (1-azq^{2n})q^{n^2}z^n.
\end{equation}
Letting $a=-q^{1+\alpha}$ and $z=-q^{\beta-1-\alpha}$,
with $\alpha, \beta\in\bC$, in \eqref{eq002} yields
\begin{equation}\label{equmm1}
\frac{(q;q)_\infty}{(-q^{1+\alpha};q)_\infty}
=\sum_{n\ge 0}(-1)^n(-q^{n+\beta-\alpha};q)_{2\alpha+1-\beta}(q^{n+1};q)_{\beta-1}
(1-q^{2n+\beta})q^{n^2+(\beta-1-\alpha) n}.
\end{equation}
We note the following special cases of \eqref{equmm1}:
\begin{equation*}
  \frac{(q;q)_\infty}{(-q;q)_\infty}
  =\sum_{n\ge 0}(-1)^n(1-q^{2n+1})q^{n^2}=\sum_{n\in\bZ}(-1)^nq^{n^2},
\end{equation*}
and
\begin{equation*}
\frac{(q;q)_\infty}{(-q^{1/2};q)_\infty}=1+\sum_{n\ge 1}(-1)^n
(1+q^{n})q^{n^2-n/2}=\sum_{n\in\bZ}(-1)^nq^{n(n+1/2)},
\end{equation*}
which are identities that were already obtained by Gau{\ss}
(see, e.g., \cite[Corollary~2.10]{MR0557013}).
Furthermore, letting $a\mapsto a/z$ followed by $z\to 0$
in the Rogers--Fine identity \eqref{eq:RF}, one obtains
\begin{equation*}
(a;q)_\infty
=\sum_{n\ge 0}\frac{(a;q)_n}{(q;q)_n}(1-aq^{2n})q^{\frac{n(3n-1)}{2}}(-a)^n.
\end{equation*}
Multiplying both sides of this equation by ${(q;q)_\infty}/{(a;q)_\infty}$,
and replacing $a$ by $q^{1+\alpha}$, where $\alpha\in \bC$, we obtain
\begin{equation}\label{equmm2}
(q;q)_\infty
  =\sum_{n\ge 0}(-1)^n(q^{n+1};q)_\alpha
  (1-q^{2n+1+\alpha})q^{\frac{n(3n+1)}{2}+\alpha n}.
\end{equation}
The case $\alpha=0$ of \eqref{equmm2}
is Euler's pentagonal number theorem in \eqref{PNT}.

We have the following averaged truncation of the very-well-poised
${}_5\phi_5$ summation.
% which considerably generalizes Andrews and Merca's averaged
%truncation of Euler's pentagonal number theorem in \cite{MR3718080}.
\begin{theorem}\label{mth3}We have
\begin{align}\label{eq5phi51}
  &1-\frac{(-abz,-qz;q)_\infty}{(az,bz;q)_\infty}
    \sum_{0\le n< k}\frac{(1-azq^{2n})(-q/b,-a,az;q)_n}{(-abz,-qz,q;q)_n}
    q^{\binom{n}{2}}(-bz)^n\nonumber\\
  &=(-1)^k(-a;q)_k\sum_{h,\ell\ge 0}\frac{(-b;q)_{h}(-q/b;q)_{\ell+k}
   q^{\binom{k}{2}} (aq^kz)^h(bz)^{\ell+k}}{(q;q)_{h+\ell+k}}
    \qbm{h+\ell+k-1}{k-1}_q\qbm{h+\ell}{\ell}_q.
\end{align}
\end{theorem}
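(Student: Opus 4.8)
The plan is to reduce \eqref{eq5phi51} to its tail and then expand that tail into the stated double sum. First I would observe that the series on the left is a normalized complete series: applying the very-well-poised ${}_5\phi_5$ summation \eqref{eq:5phi5} under the substitution $(a,b,z)\mapsto(-a,-b,-z)$ gives
\[
\sum_{n\ge 0}\frac{(1-azq^{2n})(-q/b,-a,az;q)_n}{(-abz,-qz,q;q)_n}q^{\binom{n}{2}}(-bz)^n=\frac{(az,bz;q)_\infty}{(-abz,-qz;q)_\infty},
\]
so that the prefactor $\frac{(-abz,-qz;q)_\infty}{(az,bz;q)_\infty}$ times the complete series equals $1$. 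Consequently the left-hand side of \eqref{eq5phi51} equals the prefactor times the \emph{tail} $\sum_{n\ge k}$ of that series, and it remains to identify this tail with the right-hand side.

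Next I would turn the tail into an explicit multiple sum. Writing $t_n$ for the $n$-th summand, I absorb the order-$n$ factors $\frac{(az;q)_n}{(-abz,-qz;q)_n}$ into the infinite products of the prefactor, which converts $\frac{(-abz,-qz;q)_\infty}{(az,bz;q)_\infty}t_n$ into $(1-azq^{2n})\frac{(-q/b,-a;q)_n}{(q;q)_n}q^{\binom{n}{2}}(-bz)^n$ times the two ratios $\frac{(-abzq^{n};q)_\infty}{(azq^{n};q)_\infty}$ and $\frac{(-q^{n+1}z;q)_\infty}{(bz;q)_\infty}$. Expanding each ratio by the non-terminating $q$-binomial theorem \eqref{eq:ntqbin} introduces summation indices $i,j\ge0$, yielding a triple sum over $n\ge k$, $i\ge0$, $j\ge0$. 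I then use the Pochhammer factorizations $(-a;q)_n=(-a;q)_k(-aq^{k};q)_{n-k}$ and $(-q/b;q)_n(-q^{n+1}/b;q)_j=(-q/b;q)_{n+j}$ to pull out the factors $(-a;q)_k$ and $(-q/b;q)_{\ell+k}$ and to match the remaining $(-b;q)_i$ against $(-b;q)_h$; this forces the identifications $h=i$ and $\ell+k=n+j$, under which the $b$- and $z$-powers on both sides already agree.

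Finally, fixing the total degree $N=n+i+j$ (equivalently $N=h+\ell+k$) and the index $h$, I would collapse the residual inner sum over $n$ to the claimed coefficient. The very-well-poised factor $(1-azq^{2n})$ is the source of the difficulty: its first piece contributes at expansion index $i=h$, while its second piece $-azq^{2n}$ raises both the $a$- and $z$-degree by one and hence feeds in from index $i=h-1$; combining these two contributions, which share the constraint $n+j=N-h$, is what ultimately produces the factor $(-b;q)_h$ and the $q$-power $q^{\binom{k}{2}+kh}$ cleanly. After this recombination the inner summation over $n$ is a terminating $q$-sum that I evaluate in closed form by a finite $q$-summation---either the partial $q$-binomial sum $\sum_{m=0}^{M}(-1)^m q^{\binom{m}{2}}\qbm{N}{m}_q=(-1)^{M}q^{\binom{M+1}{2}}\qbm{N-1}{M}_q$ or the $q$-Chu--Vandermonde summation---delivering exactly $\qbm{h+\ell+k-1}{k-1}_q\qbm{h+\ell}{\ell}_q$ together with the overall sign $(-1)^k$.

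I expect this last step to be the main obstacle. Without the very-well-poised factor the computation is essentially the one underlying Theorem~\ref{mth2}, where the inner sum collapses directly; here the $(1-azq^{2n})$ term couples neighbouring expansion indices, so the bookkeeping of signs and powers of $q$ during the recombination, and the correct choice and application of the finite $q$-summation, are where the real work lies. As a cross-check one may instead verify the equivalent recurrence $R_k-R_{k+1}=\frac{(-abz,-qz;q)_\infty}{(az,bz;q)_\infty}\,t_k$ for the right-hand side $R_k$ of \eqref{eq5phi51}, together with $R_k\to0$ as $k\to\infty$ (forced by the factor $q^{\binom{k}{2}}$); this route requires the same finite $q$-identity but organizes the algebra around the single term $t_k$.
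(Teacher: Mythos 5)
Your reduction to the tail and the shape of your expansion are both sound: the prefactor times the complete series is $1$ by the ${}_5\phi_5$ summation \eqref{eq:5phi5}, the order-$n$ factors $(az;q)_n/(-abz,-qz;q)_n$ do absorb into the prefactor to leave the two ratios $\frac{(-abzq^n;q)_\infty}{(azq^n;q)_\infty}$ and $\frac{(-q^{n+1}z;q)_\infty}{(bz;q)_\infty}$, and each of these is expandable by \eqref{eq:ntqbin}. The genuine gap is exactly the step you yourself flag as ``where the real work lies'': the collapse of the residual sum over $n$. This is not a routine application of the partial $q$-binomial sum $\sum_{m=0}^{M}(-1)^m q^{\binom{m}{2}}\qbm{N}{m}_q=(-1)^{M}q^{\binom{M+1}{2}}\qbm{N-1}{M}_q$ or of $q$-Chu--Vandermonde, and you never carry it out. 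Two concrete problems: first, your matching ``$h=i$, $\ell+k=n+j$'' is only heuristic bookkeeping, since $a$ and $b$ occur not only as monomials but inside the Pochhammers $(-a;q)_n$, $(-b;q)_i$, $(-q/b;q)_{n+j}$, so one cannot literally compare coefficients of $a^hb^{\ell+k}z^{h+\ell+k}$ to force these identifications; second, after factoring $(-a;q)_n=(-a;q)_k(-aq^k;q)_{n-k}$, the inner $n$-sum still carries the $a$-dependent factor $(-aq^k;q)_{n-k}$ coupled to $q^{\binom{n}{2}}$, to $q^{ni}$, and to the two pieces of $(1-azq^{2n})$ (which also create the $(-b;q)_h$ versus $(-b;q)_{h-1}$ mismatch you would need to reconcile). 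What this collapse actually amounts to is a limiting case of Watson's $_8\phi_7$ transformation, and that is precisely how the paper sidesteps your obstacle: it applies the transformation \eqref{eqm0} with $t=q^k$ (together with $z\mapsto zq^k$, $b\mapsto b/q^k$) to the tail \emph{before} any expansion, which eliminates the very-well-poised factor and the quadratic powers $q^{\binom{n}{2}}$ in one stroke and turns the $n$-sum into a plain geometric series $\sum_{n\ge0}q^{(k+h_1+h_2)n}=1/(1-q^{k+h_1+h_2})$; only then are the two $q$-binomial expansions performed.

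Note also that your remark that ``without the very-well-poised factor the computation is essentially the one underlying Theorem~\ref{mth2}'' understates the issue: the paper's proof of Theorem~\ref{mth2} likewise applies a transformation (\eqref{eqm1} with $t=q^k$, swapping the roles of $t$ and $z$) before expanding; in neither theorem does the paper expand the tail termwise and then resum over $n$. Your fallback cross-check, the telescoping recurrence $R_k-R_{k+1}=\frac{(-abz,-qz;q)_\infty}{(az,bz;q)_\infty}\,t_k$ with $R_k\to0$, is a legitimate framework, but it requires proving the same unestablished finite identity, so it does not close the gap. To complete your argument you would either have to invoke a Watson-type transformation such as \eqref{eqm0} (at which point your proof becomes the paper's), or supply a new elementary proof of the recombination identity, which is absent from the proposal.
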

Among the interesting special cases of Theorem~\ref{mth2} we have
two averaged truncations of the Rogers--Fine identity.
\begin{corollary}\label{cor12}
We have the following two averaged truncations:
%\begin{enumerate}[(1)]
%  \item (Let $a\mapsto 0$ in Theorem \ref{mth3})
\begin{equation*}
  \frac{(-qz;q)_\infty}{(bz;q)_\infty}\sum_{0\le n< k}
  \frac{(-q/b;q)_nq^{\binom{n}{2}}(-bz)^n}{(-qz,q;q)_n}
  =1-(-1)^kq^{\binom{k}{2}}\sum_{\ell\ge k}
  \frac{(-q/b;q)_{\ell}(bz)^{\ell}}{(q;q)_{\ell}}\qbm{\ell-1}{k-1}_q ,
\end{equation*}
and
 %\item (Let $b\mapsto 0$ in Theorem \ref{mth3})
\begin{align*}
  &\frac{(-qz;q)_\infty}{(az;q)_\infty}\sum_{0\le n< k}
    \frac{(1-azq^{2n})(-a,az;q)_n}{(-qz,q;q)_n}q^{n^2}(-z)^n\\
  &=1-(-1)^k(-a;q)_k\sum_{h\ge 0}
    \frac{(-q/a;q)_hq^{k^2}(aq^k)^{h}z^{h+k}}{(q;q)_{h+k}}\qbm{h+k-1}{k-1}_q.
\end{align*}

%\item (Let $a\mapsto 0$ and $b\mapsto 0$ in Theorem \ref{mth3})
%\begin{align*}
%(-qz;q)_\infty\sum_{0\le n< k}\frac{(-z)^nq^{n^2}}{(-qz,q;q)_n}=1-(-1)^k\sum_{\ell\ge k}\frac{z^{\ell}q^{\binom{\ell+1}{2}+\binom{k}{2}}}{(q;q)_{\ell}}\qbm{\ell-1}{k-1}_q .
%\end{align*}

%\item (Let $a\mapsto 0$, $b\mapsto b/z$ and $z\mapsto 0$ in Theorem \ref{mth3})
%\begin{align*}
%\frac{1}{(b;q)_\infty}\sum_{0\le n< k}\frac{(-b)^nq^{\binom{n}{2}}}{(q;q)_n}=1-(-1)^kq^{\binom{k}{2}}\sum_{\ell\ge k}\frac{b^{\ell}}{(q;q)_{\ell}}\qbm{\ell-1}{k-1}_q.
%\end{align*}
%\end{enumerate}
\end{corollary}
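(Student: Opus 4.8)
The plan is to derive both identities as one-parameter degenerations of Theorem~\ref{mth3}, exploiting that the Rogers--Fine identity \eqref{eq:RF} is the $b\to 0$ limit of the very-well-poised ${}_5\phi_5$ summation \eqref{eq:5phi5}. Concretely, the first identity will come from letting $a\to 0$ in \eqref{eq5phi51}, and the second from letting $b\to 0$; in each case I track the infinite products, the finite $q$-shifted factorials, and the balancing factors carefully through the limit.

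For the first identity I set $a\to 0$ in \eqref{eq5phi51}. On the left-hand side the factors $(-abz;q)_\infty$ and $(az;q)_\infty$ tend to $1$, so the prefactor becomes $(-qz;q)_\infty/(bz;q)_\infty$; inside the truncated sum $(-a,az;q)_n\to1$ and $1-azq^{2n}\to1$, leaving the summand $(-q/b;q)_nq^{\binom n2}(-bz)^n/(-qz,q;q)_n$. On the right-hand side the factor $(aq^kz)^h$ forces $h=0$ in the limit, so the double sum collapses to a single sum; using $\qbm{\ell}{\ell}_q=1$ and reindexing $\ell\mapsto\ell-k$ to run over $\ell\ge k$ yields the claimed identity after moving the constant term $1$ across. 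This step is routine.

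For the second identity the limit $b\to 0$ is the delicate one, since $(-q/b;q)_n$ and $(-q/b;q)_{\ell+k}$ diverge and must be balanced against the accompanying powers of $b$. The elementary limit I rely on is $\lim_{b\to0}(-q/b;q)_mb^m=q^{\binom{m+1}2}$. On the left-hand side, combining this with $q^{\binom n2}(-bz)^n$ and using $q^{\binom n2}q^{\binom{n+1}2}=q^{n^2}$ converts the summand into the Rogers--Fine summand $(1-azq^{2n})(-a,az;q)_nq^{n^2}(-z)^n/(-qz,q;q)_n$, while the prefactor becomes $(-qz;q)_\infty/(az;q)_\infty$. On the right-hand side the same limit turns the summand into $(-a;q)_kq^{\binom k2}q^{\binom{\ell+k+1}2}(aq^k)^hz^{h+\ell+k}$ times the two $q$-binomial coefficients, a genuine double sum over $h,\ell\ge0$.

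The main obstacle, and the only nontrivial step, is to collapse this double sum into the single sum of the corollary. I fix the total degree by setting $N=h+\ell$ and summing over $h$ with $\ell=N-h$; after pulling out the common factor $(-1)^k(-a;q)_kq^{\binom k2}z^{N+k}(q;q)_{N+k}^{-1}\qbm{N+k-1}{k-1}_q$, the inner sum reads $\sum_{h=0}^{N}q^{\binom{N-h+k+1}2}(aq^k)^h\qbm{N}{h}_q$. Reversing the order of summation and invoking the finite $q$-binomial theorem $(-x;q)_N=\sum_{j=0}^N\qbm{N}{j}_qq^{\binom j2}x^j$ with $x=q/a$ evaluates this in closed form; the only arithmetic to check is the exponent identity $\binom{j+k+1}2-kj=\binom{j+1}2+\binom{k+1}2$ together with $\binom k2+\binom{k+1}2=k^2$, which shows the inner sum equals $(aq^k)^Nq^{\binom{k+1}2}(-q/a;q)_N$ and hence supplies the factors $(-q/a;q)_hq^{k^2}(aq^k)^h$ appearing in the corollary. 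Moving the constant term $1$ across completes the proof.
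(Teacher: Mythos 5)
Your proposal is correct and matches the paper's intended derivation: the paper states Corollary~\ref{cor12} without proof, as special cases of the averaged truncation of the very-well-poised ${}_5\phi_5$ summation (Theorem~\ref{mth3}; the text's pointer to Theorem~\ref{mth2} there is evidently a slip), which is exactly what you specialize via $a\to 0$ and $b\to 0$. All your computations check out, including the only nontrivial step --- collapsing the double sum along $N=h+\ell$ via the finite $q$-binomial theorem, with the exponent identities $\binom{j+k+1}{2}-kj=\binom{j+1}{2}+\binom{k+1}{2}$ and $\binom{k}{2}+\binom{k+1}{2}=k^2$ --- as well as the limit $\lim_{b\to 0}(-q/b;q)_m\,b^m=q^{\binom{m+1}{2}}$ and the collapse to $h=0$ in the $a\to 0$ case.
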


Again, we list some further special cases of interest:
%Using similar variables substituting or limit process as in Corollary \ref{cor11} to first identity of Corollary \ref{cor12}, we can find the same identities (2) and (3) of Corollary \ref{cor11}. For the second identity of Corollary \ref{cor12}, we have the following corollary.
\begin{corollary}\label{cormm}
For any $\alpha,\beta\in\bC$ and $k\in\bN$ we have:
%\begin{enumerate}[(1)]
%\item (Using similar arguments as in \eqref{equmm1} to the second identity of Corollary \ref{cor12})
\begin{align}\label{eqmmm1}
  &\frac{1}{(q;q)_\infty}\sum_{0\le n<k}(-1)^n(q^{n+1};q)_\alpha
    (1-q^{2n+1+\alpha})q^{\frac{n(3n+1)}{2}+\alpha n}\nonumber\\
  &=1-(-1)^k\sum_{h\ge k}\frac{q^{(k+1+\alpha)h+\binom{k}{2}}}{(q;q)_{h}}
    \qbm{h-1}{k-1}_q,
\end{align}
and
% \item (Using similar arguments as in \eqref{equmm2}
%to the second identity of Corollary \ref{cor12})
\begin{align}\label{eqmmm2}
  &\frac{(-q^{1+\alpha};q)_\infty}{(q;q)_\infty}
    \sum_{0\le n<k}(-1)^n(- q^{n+\beta-\alpha};q)_{2\alpha+1-\beta}
    (q^{n+1};q)_{\beta-1}
(1-q^{2n+\beta})q^{n^2+(\beta-1-\alpha) n}\nonumber\\
  &=1-(-1)^k(-q^{1+\alpha};q)_k\sum_{h\ge k}
    \frac{(-q^{-\alpha};q)_{h-k}q^{(k+\beta)h-k(1+\alpha)}}{(q;q)_{h}}
    \qbm{h-1}{k-1}_q.
\end{align}
%\end{enumerate}
\end{corollary}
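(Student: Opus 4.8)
The plan is to obtain both \eqref{eqmmm1} and \eqref{eqmmm2} as specializations of the second equation of Corollary~\ref{cor12}, which records the averaged truncation of the Rogers--Fine identity \eqref{eq:RF} (itself the $b\to0$ case of Theorem~\ref{mth3}). This mirrors precisely the way \eqref{equmm1} and \eqref{equmm2} were derived from \eqref{eq:RF} in the introduction: since passing from a summation to its averaged truncation commutes with multiplying the identity by any factor independent of the summation index, the averaged truncation of \eqref{equmm1} (resp.\ of \eqref{equmm2}) must coincide with the correspondingly specialized averaged truncation of \eqref{eq:RF}.

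For \eqref{eqmmm2}, I would set $a=q^{1+\alpha}$ and $z=q^{\beta-1-\alpha}$ in the second equation of Corollary~\ref{cor12}. Then $az=q^\beta$, so the factor $(1-azq^{2n})$ becomes $(1-q^{2n+\beta})$ and $q^{n^2}(-z)^n$ becomes $(-1)^nq^{n^2+(\beta-1-\alpha)n}$, already matching the summand of \eqref{equmm1}. It remains to rewrite the $n$-dependent quotient $(-a,az;q)_n/(-qz,q;q)_n$ together with the prefactor $(-qz;q)_\infty/(az;q)_\infty=(-q^{\beta-\alpha};q)_\infty/(q^\beta;q)_\infty$. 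Writing every factorial via $(x;q)_c=(x;q)_\infty/(xq^c;q)_\infty$, the resulting quotients of infinite products telescope, so that after absorbing the prefactor the $n$-th term acquires exactly the factor $(-q^{n+\beta-\alpha};q)_{2\alpha+1-\beta}(q^{n+1};q)_{\beta-1}$ while the series acquires the global prefactor $(-q^{1+\alpha};q)_\infty/(q;q)_\infty$, which is the shape of \eqref{eqmmm2}. On the right-hand side the same substitution gives $(-q/a;q)_h=(-q^{-\alpha};q)_h$ and $(-a;q)_k=(-q^{1+\alpha};q)_k$; collecting the powers of $q$ and reindexing $\ell=h+k$ then recovers the stated series.

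For \eqref{eqmmm1}, I would instead perform the substitution $a\mapsto a/z$ in the second equation of Corollary~\ref{cor12} and then let $z\to0$, finally setting $a=q^{1+\alpha}$---exactly the limit used to produce \eqref{equmm2} from \eqref{eq:RF}. Under $a\mapsto a/z$ the product $(-a/z;q)_n(-z)^n$ has the finite limit $(-1)^na^nq^{\binom n2}$ as $z\to0$, while $(-qz;q)_\infty\to1$ and $(-qz;q)_n\to1$, so the left-hand side collapses to $\frac{1}{(a;q)_\infty}\sum_{0\le n<k}\frac{(1-aq^{2n})(a;q)_n}{(q;q)_n}(-a)^nq^{n(3n-1)/2}$; setting $a=q^{1+\alpha}$ and using $(q^{1+\alpha};q)_n/\big((q^{1+\alpha};q)_\infty(q;q)_n\big)=(q^{n+1};q)_\alpha/(q;q)_\infty$ yields the left-hand side of \eqref{eqmmm1}. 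On the right-hand side the product $(-a/z;q)_k\,(-qz/a;q)_h\,z^{h+k}$ again has a finite limit, because the $z^{-k}$ from $(-a/z;q)_k$ and the $z^{-h}$ from $(aq^k/z)^h$ cancel the $z^{h+k}$; reindexing $\ell=h+k$ produces the claimed single sum.

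The main obstacle is the careful bookkeeping of the degenerate limits and the telescoping, rather than any conceptual difficulty. Concretely, in \eqref{eqmmm1} one must verify that the divergent leading terms of $(-a/z;q)_k$ and $(-a/z;q)_n$ cancel exactly against the explicit powers of $z$ on both sides, so that the $z\to0$ limit is finite and may be taken term by term; in \eqref{eqmmm2} one must check that the quotient $(-a,az;q)_n/(-qz,q;q)_n$, once the prefactor $(-q^{\beta-\alpha};q)_\infty/(q^\beta;q)_\infty$ is absorbed and all factorials are rewritten via $(x;q)_c=(x;q)_\infty/(xq^c;q)_\infty$, reassembles precisely into $(-q^{n+\beta-\alpha};q)_{2\alpha+1-\beta}(q^{n+1};q)_{\beta-1}$ with the prefactor $(-q^{1+\alpha};q)_\infty/(q;q)_\infty$. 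Both reductions are routine but sign- and exponent-sensitive, and it is these exponent identities (such as $n^2+\binom n2=n(3n-1)/2$ and the collapse of the $q$-power on the right-hand side of \eqref{eqmmm2} to $(k+\beta)\ell-k(1+\alpha)$) that require the most attention.
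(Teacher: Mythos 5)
Your proposal is correct and takes essentially the same route as the paper, which presents Corollary~\ref{cormm} as a specialization of the second identity of Corollary~\ref{cor12} via exactly the substitutions you use: $a=q^{1+\alpha}$, $z=q^{\beta-1-\alpha}$ for \eqref{eqmmm2}, and $a\mapsto a/z$ followed by $z\to0$ and $a=q^{1+\alpha}$ for \eqref{eqmmm1}, mirroring the derivations of \eqref{equmm1} and \eqref{equmm2} from \eqref{eq:RF} in the introduction. Your bookkeeping (the telescoping into $(-q^{n+\beta-\alpha};q)_{2\alpha+1-\beta}(q^{n+1};q)_{\beta-1}$, the identity $n^2+\binom{n}{2}=n(3n-1)/2$, and the reindexing $h\mapsto h-k$ yielding $q^{(k+\beta)h-k(1+\alpha)}$ and $q^{(k+1+\alpha)h+\binom{k}{2}}$) is accurate.
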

\begin{remark} Corollary~\ref{cormm} extends the main results
  of Yao \cite{MR4497425}:
 \begin{enumerate}[(i)]
 \item  In the special case of $\alpha=r-1$, $r\in\bN$,
 the identity \eqref{eqmmm1} has been established by
   Yao \cite[Theorem~1.2]{MR4497425}.
 \item Multiplying both sides of \eqref{eqmmm2} by ${(-q;q)_\alpha}$
   where $\alpha\in\bC$, we obtain
   \begin{align}\label{eq1000}
     &\frac{(-q;q)_\infty}{(q;q)_\infty}\sum_{0\le n<k}(-1)^n
       (- q^{n+\beta-\alpha};q)_{2\alpha+1-\beta}(q^{n+1};q)_{\beta-1}
(1-q^{2n+\beta})q^{n^2+(\beta-1-\alpha) n}\nonumber\\
     &=(-q;q)_\alpha-(-1)^k(-q;q)_{k+\alpha}\sum_{h\ge k}
       \frac{(-q^{-\alpha};q)_{h-k}q^{(k+\beta)h-k(1+\alpha)}}{(q;q)_{h}}
       \qbm{h-1}{k-1}_q.
\end{align}
The two cases $\alpha=r-1$ and $\beta=2r-1$,
respectively $\alpha=r-1$ and $\beta=2r$,  with $r\in\bN$,
of  \eqref{eq1000},
%we obtain from \eqref{eq1000} the following identity:
  % \begin{align*}
%&\frac{(-q;q)_\infty}{(q;q)_\infty}\sum_{0\le n<k}(-1)^n(q^{n+1};q)_{2r-2}
%(1-q^{2n+2r-1})q^{n^2+(r-1) n}\\
%    &=(-q;q)_{r-1}-(-1)^k(-q;q)_{k+r-1}\sum_{h\ge k}\frac{(-q^{1-r};q)_{h-k}
%       q^{(k+2r-1)h-kr}}{(q;q)_{h}}\qbm{h-1}{k-1}_q.
%\end{align*}
%Assuming $\alpha=r-1$ and $\beta= 2r$ to be non-negative integers,
%we obtain from \eqref{eq1000} the following identity:
%   \begin{align*}
%&\frac{(-q;q)_\infty}{(q;q)_\infty}\sum_{0\le n<k}(-1)^n(q^{n+1};q)_{2r-1}
%(1-q^{n+r})q^{n^2+r n}\\
%&=(-q;q)_{r-1}-(-1)^k(-q;q)_{k+r-1}\sum_{h\ge k}
%\frac{(-q^{1-r};q)_{h-k}q^{(k+2r)h-kr}}{(q;q)_{h}}\qbm{h-1}{k-1}_q.
%\end{align*}
%The above two identities
are essentially \cite[Theorem~1.3]{MR4497425} of Yao.
\item Performing to the identity in \eqref{eqmmm2} the substitution
$\alpha\mapsto \alpha-1/2$, with $\alpha\in\bC$, multiplying both sides
by ${(-q^{1/2};q)_\alpha}$, and then replacing $q$ by $q^2$,
we obtain
\begin{align}\label{eqmmm21}
&\frac{(-q;q^2)_\infty}{(q^2;q^2)_\infty}\sum_{0\le n<k}(-1)^n(- q^{2n+1+2(\beta-\alpha)};q^2)_{2\alpha-\beta}(q^{2n+2};q^2)_{\beta-1}
(1-q^{4n+2\beta})q^{n(2n-1)+2(\beta-\alpha) n}\nonumber\\
&=(-q;q^2)_\alpha-(-1)^k(-q;q^2)_{k+\alpha}\sum_{h\ge k}\frac{(-q^{1-2\alpha};q^2)_{h-k}q^{2(k+\beta)h-k(1+2\alpha)}}{(q^2;q^2)_{h}}\qbm{h-1}{k-1}_{q^2}.
\end{align}
The two cases $\alpha=r-1$ and $\beta=2r-1$,
respectively $\alpha=r-1$ and $\beta=2r$,  with $r\in\bN$,
of  \eqref{eqmmm21},
%Assuming $\alpha=r-1$ and $\beta=2r-1$ to be non-negative integers,
%we have from \eqref{eqmmm21} the identity
%\begin{align*}
%&\frac{(-q;q^2)_\infty}{(q^2;q^2)_\infty}\sum_{0\le n<k}(-1)^n(q^{2n+2};q^2)_{2r-2}
%(1-q^{2n+2r-1})q^{2n^2+(2r-1) n}\nonumber\\*
%  &=(-q;q^2)_{r-1}-(-1)^k(-q;q^2)_{k+r-1}\sum_{h\ge k}
%    \frac{(-q^{3-2r};q^2)_{h-k}q^{2(k+2r-1)h-k(2r-1)}}{(q^2;q^2)_{h}}
%    \qbm{h-1}{k-1}_{q^2}.
%\end{align*}
%Assuming $\alpha=r$ and $\beta=2r$ to be non-negative integers,
%we obtain from \eqref{eq1000} the identity
%\begin{align*}
%&  \frac{(-q;q^2)_\infty}{(q^2;q^2)_\infty}
% \sum_{0\le n<k}(-1)^n(q^{2n+2};q^2)_{2r-1}
%(1-q^{4n+4r})q^{2n^2+(2r-1) n}\nonumber\\*
%&=(-q;q^2)_r-(-1)^k(-q;q^2)_{k+r}\sum_{h\ge k}
%   \frac{(-q^{1-2r};q^2)_{h-k}q^{2(k+2r)h-k(1+2r)}}{(q^2;q^2)_{h}}
%                                             \qbm{h-1}{k-1}_{q^2}.
%\end{align*}
%The above two identities
are essentially \cite[Theorem~1.4]{MR4497425} of Yao.

 \end{enumerate}

\end{remark}

\section{The proofs of the theorems}\label{sec:proofs}

In our proofs we make frequent use of various elementary identities
for the $q$-shifted factorial. The reader may consult
\cite[Appendix~I]{MR2128719} for a standard list of such identities.

\subsection{Averaged truncation of Jacobi's triple product identity}
Our proof of Theorem~\ref{mth1} relies on the following
transformation, first established by Schilling and
Warnaar~\cite[Equation~(4.4)]{MR1919820} in 2002.
(The transformation was later also obtained in equivalent form
by Berkovich~\cite{MR4217074}, which is the form we give below.
A closely related transformation was given by
Warnaar~\cite[Theorem~1.5]{MR1990932},
see also Andrews and Warnaar~\cite{MR2319567}.)
\begin{lemma}[Schilling and Warnaar; Berkovich]\label{lem2}
We have
\begin{align}\label{eqtm0}
\sum_{n\ge 1}(-1)^nq^{\binom{n}{2}}(t_1^n-t_2^n)=(t_2-t_1)(q,qt_1,qt_2;q)_\infty\sum_{n\ge 0}\frac{(t_1t_2; q)_{2n}q^{n}}{(q,qt_1,qt_2,t_1t_2;q)_n }.
\end{align}
\end{lemma}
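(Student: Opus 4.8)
The plan is to prove the transformation identity~\eqref{eqtm0} by showing both sides satisfy the same functional equation in a suitable variable, or more directly by manipulating the right-hand side into the left. First I would observe that the left-hand side $\sum_{n\ge 1}(-1)^nq^{\binom{n}{2}}(t_1^n-t_2^n)$ splits, via Jacobi's triple product identity~\eqref{JTP} (or rather its one-sided truncation), into a difference of two sums each of which is related to a partial theta function. Specifically, $\sum_{n\ge 0}(-1)^nq^{\binom{n}{2}}t^n$ is not itself a clean product, so the natural move is to recognize the combination that does telescope. My first concrete step would be to divide through by $(t_2-t_1)$ and aim to prove
\begin{equation*}
\frac{1}{t_2-t_1}\sum_{n\ge 1}(-1)^nq^{\binom{n}{2}}(t_1^n-t_2^n)
=(q,qt_1,qt_2;q)_\infty\sum_{n\ge 0}\frac{(t_1t_2;q)_{2n}q^n}{(q,qt_1,qt_2,t_1t_2;q)_n},
\end{equation*}
noting that the left-hand ratio is a symmetric, entire function of $t_1,t_2$.

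Next I would attack the right-hand side directly. The key structural observation is that $(t_1t_2;q)_{2n}/(t_1t_2;q)_n=(t_1t_2q^n;q)_n$, so the summand simplifies to $(t_1t_2q^n;q)_n\,q^n/(q,qt_1,qt_2;q)_n$. This suggests expanding $(t_1t_2q^n;q)_n$ by the $q$-binomial theorem and interchanging the order of summation, after which I expect the inner sum over $n$ to collapse into $q$-shifted factorials by an application of a known ${}_2\phi_1$ or ${}_1\phi_1$ summation (most plausibly the $q$-Gauss summation~\eqref{eq:qGauss} or a $q$-analogue of a Kummer-type evaluation). The prefactor $(q,qt_1,qt_2;q)_\infty$ is precisely engineered to cancel the denominators $(qt_1,qt_2;q)_n$ after resummation, leaving a residual theta-type series. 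The alternative, cleaner route is to verify that both sides agree as formal power series in $q$ after fixing generic $t_1,t_2$, by checking that both satisfy the same first-order $q$-difference equation under $t_1\mapsto qt_1$ (the asymmetry in $t_1$ versus $t_2$ being only apparent, since the whole expression is antisymmetric); matching a single initial value then forces equality.

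I would organize the argument around whichever of these two routes produces the fewest special-function identities to invoke, and I suspect the $q$-difference-equation route is the more robust. Concretely, I would set $F(t_1,t_2)$ equal to the right-hand side, compute $F(qt_1,t_2)$, and show the ratio $F(qt_1,t_2)/F(t_1,t_2)$ matches the corresponding ratio for the left-hand side, reducing everything to a telescoping identity among the series coefficients; the shift $t_1\mapsto qt_1$ turns $(q,qt_1,qt_2;q)_\infty$ into itself times $(1-qt_1)^{-1}$ and rescales the summand in a controlled way. The main obstacle, as I see it, is the resummation step: the interchange of summation and the subsequent evaluation of the inner sum is where a nontrivial basic hypergeometric summation must be identified and applied correctly, and getting the bookkeeping of the $q$-powers and the shifted factorials to line up (particularly the doubled index $2n$ in $(t_1t_2;q)_{2n}$) is the delicate part. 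Establishing convergence and legitimacy of the interchange for $|q|<1$ with $t_1,t_2$ in a suitable region is routine but must be stated. Since the lemma is attributed to Schilling–Warnaar and Berkovich, I would expect their original proofs to proceed via exactly such a summation or via a Bailey-pair/$q$-difference argument, and I would follow the cleaner of these.
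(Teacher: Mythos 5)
First, a point of reference: the paper does not prove Lemma~\ref{lem2} at all. The transformation is imported from the literature, attributed to Schilling and Warnaar \cite[Equation~(4.4)]{MR1919820} (where it arises from conjugate Bailey pair machinery) and, in the form stated, to Berkovich \cite{MR4217074}; see also Warnaar \cite[Theorem~1.5]{MR1990932} and Andrews--Warnaar \cite{MR2319567}. So there is no internal proof to compare your attempt against, and the only question is whether your sketch amounts to an independent proof. It does not: it is a plan naming two candidate routes, neither of which is carried out, and in both cases the step you defer is exactly where the entire difficulty of the identity is concentrated.

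Concretely, in your first route, after the correct simplification $(t_1t_2;q)_{2n}/(t_1t_2;q)_n=(t_1t_2q^n;q)_n$, you propose to expand this factor, interchange summations, and have the inner sum collapse via ``a known ${}_2\phi_1$ or ${}_1\phi_1$ summation (most plausibly the $q$-Gauss summation)''. If you actually perform this --- expand $(t_1t_2q^n;q)_n$ by the terminating $q$-binomial theorem and swap the order of summation --- the inner sum becomes, after reindexing, $\sum_{m\ge0}q^{m(j+1)}/(q,q^{j+1}t_1,q^{j+1}t_2;q)_m$, a series with three $q$-shifted factorials in the denominator and none in the numerator, to which $q$-Gauss (a ${}_2\phi_1$ evaluation) and every other standard summation fail to apply. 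This is no accident: writing $(t_1t_2;q)_{2n}$ as four base-$q$ factorials shows that the right-hand side of \eqref{eqtm0} is a \emph{nonterminating} balanced ${}_4\phi_3$, which has no product evaluation --- indeed the left-hand side is not a product but a difference of partial theta functions, so the lemma is a transformation, not a summation, and cannot follow from one application of a classical evaluation. Your second route is equally unsubstantiated: you compute the effect of $t_1\mapsto qt_1$ only on the prefactor $(q,qt_1,qt_2;q)_\infty$, but inside the sum this shift also sends $t_1t_2\mapsto qt_1t_2$, altering both $(t_1t_2;q)_{2n}$ and $(t_1t_2;q)_n$ in every term, so no ``controlled rescaling'' of the summand results and you never exhibit a functional equation for the right-hand side. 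Worse, even the left-hand side does not satisfy a self-contained first-order equation in $t_1$: using $\theta(t)=1-t\,\theta(qt)$ for $\theta(t)=\sum_{n\ge0}(-1)^nq^{\binom{n}{2}}t^n$, the shifted left-hand side picks up an inhomogeneous term involving the full series $\theta(t_2)$, so the ``same equation plus one initial value'' scheme would already presuppose nontrivial knowledge of the identity. Finally, the appeal to Jacobi's triple product identity to ``split'' the left-hand side is a red herring: the splitting into two partial theta series is immediate and uses nothing. As written, the proposal identifies the shape of the problem but supplies none of its substance; your closing remark that you would ``follow the cleaner'' of the original proofs concedes exactly this.
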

The special case $t_1\mapsto t$ and $t_2\to 0$ of \eqref{eqtm0} yields the following
identity for a partial theta function:
\begin{equation}\label{eqtm01}
\sum_{n\ge 1}(-1)^nq^{\binom{n}{2}}t^n=-t(q,qt;q)_\infty\sum_{n\ge 0}\frac{q^{n}}{(q,qt;q)_n }.
\end{equation}

We are ready to give the proof of Theorem~\ref{mth1}.
\begin{proof}[Proof of Theorem~\ref{mth1}]
We rewrite the series under consideration in the following form:
\begin{align*}
&\sum_{n\ge k}(-1)^nq^{\binom{n+1}{2}}z^{-n}(1-z^{2n+1})\\
&=\sum_{n\ge k}(-1)^nq^{n(n+1)/2}z^{-n}-\sum_{n\ge k}(-1)^{n}q^{n(n+1)/2}z^{n+1}\\
&=\sum_{n\ge 1}(-1)^{n-1+k}q^{\binom{n}{2}+kn+\binom{k}{2}}z^{1-k-n}
-\sum_{n\ge 1}(-1)^{n-1+k}q^{\binom{n}{2}+kn+\binom{k}{2}}z^{n+k}\\
&=(-1)^{k-1}z^{1-k}q^{\binom{k}{2}}\sum_{n\ge 1}(-1)^{n}q^{\binom{n}{2}}((q^k/z)^{n}-(q^kz)^n)\\\
&\quad\;+(-1)^{k-1}(z^{1-k}-z^{k})q^{\binom{k}{2}}\sum_{n\ge 1}(-1)^{n}q^{\binom{n}{2}}(q^kz)^n.
\end{align*}
Using \eqref{eqtm0} and \eqref{eqtm01} with $t_1=q^k/z$, $t_2=q^kz$ and $t=q^kz$, we have
\begin{align*}
&(-1)^{k-1}q^{-\binom{k}{2}}\sum_{n\ge k}(-1)^nq^{n(n+1)/2}z^{-n}(1-z^{2n+1})\\
&=z^{1-k}(q^{k}z-q^{k}/z)(q,q^{1+k}/z,q^{1+k}z;q)_\infty
\sum_{n\ge 0}\frac{(q^{n+2k}; q)_{n}q^{n}}{(q,q^{1+k}/z,q^{1+k}z;q)_n }\\
&\quad\;-(z^{1-k}-z^{k})(q,q^{1+k}z;q)_\infty \,q^{k}z\,\sum_{n\ge 0}\frac{q^{n}}{(q,q^{1+k}z;q)_n }.
\end{align*}
After some elementary manipulations of the $q$-shifted factorials we arrive at
\begin{align*}
&\frac{(-1)^{k-1}q^{-\binom{k}{2}}}{(qz,q/z,q;q)_\infty}\sum_{n\ge k}(-1)^nq^{n(n+1)/2}z^{-n}(1-z^{2n+1})\\
&=-\sum_{n\ge 0}\frac{z^{-k}(1-z^2)(q^{n+2k}; q)_{n}q^{n+k}}{(q;q)_n(q/z,qz;q)_{n+k} }-\sum_{n\ge 0}\frac{z(z^{1-k}-z^{k})q^{n+k}}{(q;q)_n(qz;q)_{n+k}(q/z;q)_\infty}.
\end{align*}
Using Jacobi's triple product identity and the definition of the $q$-binomial coefficients,
the above identity can be rewritten as
\begin{align*}
&\frac{(-1)^{k-1}}{(z,q/z,q;q)_\infty}\sum_{0\le n<k}(-1)^nq^{n(n+1)/2}z^{-n}(1-z^{2n+1})-(-1)^{k-1}\\
&=\sum_{n\ge k}\frac{z^{-k}(1+z)q^{n+\binom{k}{2}}}{(q/z,qz;q)_{n}}\qbm{2n-1}{n-k}_q+\frac{1-z^{2k-1}}{1-z}\sum_{n\ge k}\frac{z^{2-k}\,q^{n+\binom{k}{2}}}{(q;q)_{n-k}(qz;q)_{n}(q/z;q)_\infty}.
\end{align*}
This completes the proof of \eqref{eqtjs1}.

Similarly, for \eqref{eqtjs2} we have
\begin{align*}
&\sum_{|n|\ge k}(-1)^nq^{n(n-1)/2}z^{n}=\sum_{n\ge k}(-1)^nq^{n(n+1)/2}z^{-n}+\sum_{n\ge k}(-1)^nq^{n(n-1)/2}z^{n}\\
&=\sum_{n\ge 1}(-1)^{k+n-1}q^{\binom{n}{2}+kn+\binom{k}{2}}z^{-k-(n-1)}+\sum_{n\ge 1}(-1)^{k+n-1}q^{\binom{n-1}{2}+k(n-1)+\binom{k}{2}}z^{k+n-1}\\
&=(-z)^{1-k}q^{\binom{k}{2}}\sum_{n\ge 1}(-1)^{n}q^{\binom{n}{2}}(q^k/z)^{n}+(-z)^{k-1}q^{\binom{k}{2}+1-k}\sum_{n\ge 1}(-1)^{n}q^{\binom{n}{2}}(zq^{k-1})^{n}\\
&=(q,q^{1+k}/z;q)_\infty\sum_{n\ge 0}\frac{(-1)^{k}z^{-k}\,q^{n+k+\binom{k}{2}}}{(q,q^{1+k}/z;q)_n }+(q,q^kz;q)_\infty \sum_{n\ge 0}\frac{(-1)^{k}z^k\,q^{n+\binom{k}{2}}}{(q,q^kz;q)_n },
\end{align*}
by using \eqref{eqtm01}. Hence
\begin{align*}
&\frac{(-1)^{k-1}}{(qz,q/z,q;q)_\infty}\sum_{|n|< k}(-1)^nq^{n(n-1)/2}z^{n}-(-1)^{k-1}(1-z)\\
&=\sum_{n\ge k}\frac{z^{-k}\,q^{n+\binom{k}{2}}}{(q;q)_{n-k}(q/z;q)_{n}(qz;q)_\infty}
+\sum_{n\ge k}\frac{z^k\,q^{n-k+\binom{k}{2}}}{(q;q)_{n-k}(qz;q)_{n-1}(q/z;q)_\infty},
\end{align*}
by using Jacobi's triple product identity. This completes the proof.
\end{proof}

\subsection{Averaged truncation of the $q$-Gau{\ss} summation and
of the very-well-poised ${}_5\phi_5$ summation}
Our proofs of Theorems~\ref{mth2} and \ref{mth3}
rely on the following transformation formulas.
\begin{lemma}Let $|t|,|z|,|q|<1$ and $a,b\in\bC$. We have
\begin{align}\label{eqm0}
(1-t)\sum_{n\ge 0}\frac{(az,bz;q)_n}{(abtz,qz;q)_n}t^n
=\sum_{n\ge 0}\frac{(1-atzq^{2n})(q/b,at,az;q)_n}{(abtz,qz,qt;q)_n}q^{\binom{n}{2}}(-btz)^n.
\end{align}
In particular,
\begin{align}\label{eqm1}
(1-t)\sum_{n\ge 0}\frac{(az,bz;q)_n}{(abzt,qz;q)_n}t^n=(1-z)\sum_{n\ge 0}\frac{(at,bt;q)_nz}{(abzt,qt;q)_n}z^n.
\end{align}
\end{lemma}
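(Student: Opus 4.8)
The plan is to treat the two displayed identities asymmetrically: I regard \eqref{eqm0} as the substantive transformation and deduce \eqref{eqm1} from it almost for free. Indeed, the right-hand side of \eqref{eqm0} is manifestly invariant under the interchange $t\leftrightarrow z$, since the quantity $atz$, the product $(at,az;q)_n$, the product $(qz,qt;q)_n$, the factor $(-btz)^n$, and the remaining factors $1-atzq^{2n}$, $(q/b;q)_n$, $(abtz;q)_n$ and $q^{\binom n2}$ are all symmetric in $t$ and $z$. Granting \eqref{eqm0}, I would apply it once as written and once with $t$ and $z$ exchanged; since the two right-hand sides coincide, the two left-hand sides are equal, and this equality is precisely \eqref{eqm1} (after noting $abtz=abzt$). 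So no separate work is needed for the ``in particular'' statement.

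For \eqref{eqm0} itself I would first record two orienting observations. Under the hypotheses $|t|,|z|,|q|<1$ both series converge absolutely, so the rearrangements and the limit arguments below are legitimate. Moreover, letting $b\to 0$ collapses \eqref{eqm0} to
$$(1-t)\sum_{n\ge 0}\frac{(az;q)_n}{(qz;q)_n}t^n=\sum_{n\ge 0}\frac{(1-atzq^{2n})(at,az;q)_n}{(qz,qt;q)_n}q^{n^2}(tz)^n,$$
which is exactly the Rogers--Fine identity (the transformation underlying \eqref{eq:RF}) applied to the series $\sum_n\frac{(az;q)_n}{(qz;q)_n}t^n$. Thus \eqref{eqm0} is a two-parameter extension of Rogers--Fine; this identification both serves as a sanity check and signals the class of tools to use.

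To prove \eqref{eqm0} I would reduce to the terminating case and then invoke a very-well-poised transformation. Expanding each side as a power series in $t$, the coefficient of $t^m$ is in each case a finite sum of terms rational in $a$ (on the right-hand side the $n$-th summand contributes only powers $t^{\ge n}$, and on the left $(abtz;q)_n^{-1}$ is a power series in $t$ whose coefficients are rational in $a$). It therefore suffices to prove \eqref{eqm0} whenever $az=q^{-N}$ with $N\in\bN_0$: for fixed $m$ the difference of the two coefficients of $t^m$ is then a rational function of $a$ vanishing at the infinitely many distinct points $a=q^{-N}/z$, hence identically zero. When $az=q^{-N}$ the factor $(az;q)_n$ truncates both sides to finite sums, and the right-hand side becomes a terminating very-well-poised series with leading parameter $atz$. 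I would obtain the terminating identity from Watson's transformation of a terminating very-well-poised ${}_8\phi_7$ into a balanced ${}_4\phi_3$ (see \cite{MR2128719}), specializing its parameters so that the ${}_4\phi_3$ degenerates to the ${}_2\phi_1$-type series $\sum_n\frac{(az,bz;q)_n}{(abtz,qz;q)_n}t^n$ on the left, while the ${}_8\phi_7$ degenerates to the ${}_5\phi_5$-type series on the right, the vanishing parameters producing the factor $q^{\binom n2}$ and the separating product prefactor collapsing to $(1-t)$.

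The hard part will be this last step: choosing the five free parameters of Watson's ${}_8\phi_7$ and the directions in which they tend to $0$ or $\infty$ so that, simultaneously, the argument, the surviving $q$-shifted factorials, the emergent $q^{\binom n2}$, and the closed-form prefactor all match \eqref{eqm0} exactly. This is a delicate but routine piece of bookkeeping in the standard degeneration calculus for basic hypergeometric series. Should the Watson route prove awkward, an alternative is to imitate the classical derivation of the Rogers--Fine identity directly: establish a contiguous relation expressing $\sum_n\frac{(az,bz;q)_n}{(abtz,qz;q)_n}t^n$ in terms of itself with shifted parameters, verify that the right-hand side of \eqref{eqm0} satisfies the same relation, and match the common value at $t=0$, where both sides equal $1$.
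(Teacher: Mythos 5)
Your handling of \eqref{eqm1} is exactly right and coincides with what the paper intends by ``in particular'': the right-hand side of \eqref{eqm0} is symmetric under $t\leftrightarrow z$, so applying \eqref{eqm0} twice (once with $t$ and $z$ exchanged) equates the two left-hand sides, which is \eqref{eqm1}; the paper's proof indeed only addresses \eqref{eqm0}. You have also correctly identified the engine behind \eqref{eqm0}: like the paper, you reduce it to Watson's ${}_8\phi_7$ transformation. The problem is that your argument stops exactly where the work begins. The reduction to the terminating case $az=q^{-N}$ (coefficients of $t^m$ are polynomial in $a$ and agree at infinitely many points) is legitimate, but the promised specialization of Watson's transformation --- the only substantive computation in your plan --- is never exhibited; you defer it as ``delicate but routine bookkeeping'' and even hedge with an unspecified fallback. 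As submitted, this is a strategy, not a proof.

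The missing step does close, so the gap is fillable rather than fatal. Write Watson's terminating transformation \cite[Equation~(2.5.1)]{MR2128719} with very-well-poised parameter $A$ and numerator parameters $B,C,D,E,q^{-N}$; let $E\to\infty$, set $D=q$, and specialize $A=atz$, $B=at$, $C=q/b$, $q^{-N}=az$. Then on the ${}_8\phi_7$ side the factors $(D;q)_n=(q;q)_n$ and $(Aq/D;q)_n=(A;q)_n$ cancel the standard $(A;q)_n/(q;q)_n$, the limit $E\to\infty$ produces $(-1)^nq^{\binom{n}{2}}$, the argument collapses to $btz$, and the denominators become $(qz,abtz,qt;q)_n$, so the ${}_8\phi_7$ equals $(1-atz)^{-1}$ times the right-hand side of \eqref{eqm0}; on the other side the ${}_4\phi_3$ degenerates to $\sum_{n\ge 0}\frac{(az,bz;q)_n}{(abtz,qz;q)_n}t^n$ with prefactor $(1-Aq^{N})/(1-A)=(1-t)/(1-atz)$, giving \eqref{eqm0}. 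You should also know that the whole terminating detour is avoidable: the paper quotes the \emph{nonterminating} limiting case of Watson's transformation, \cite[Equation~(3.2.11)]{MR2128719}, which is valid as it stands for $|t|,|z|,|q|<1$, and obtains \eqref{eqm0} by the bare substitutions $e\mapsto q$, $a\mapsto dt$ (the infinite-product prefactor collapsing to $(qt;q)_\infty/(t;q)_\infty=1/(1-t)$), then $d\mapsto az$, $c\mapsto at$, and finally $b\mapsto q/b$. That route needs no rational-function argument and no analytic continuation, which is what quoting the right form of Watson buys you.
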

\begin{proof}The transformation \eqref{eqm0} follows from a limiting case
of Watson's $_8\phi_7$ transformation formula \cite[Equation~(3.2.11)]{MR2128719}:
\begin{align*}
&\sum_{n\ge 0}\frac{(aq/bc,d,e;q)_n}{(q,aq/b,aq/c;q)_n}\left(\frac{aq}{de}\right)^n\\
&=\frac{(aq/d,aq/e;q)_\infty}{(a,aq/de;q)_\infty}\sum_{n\ge 0}\frac{(1-aq^{2n})(a,b,c,d,e;q)_nq^{\binom{n}{2}}}{(q,aq/b,aq/c,aq/d,aq/e;q)_n}\left(-\frac{a^2q^2}{bcde}\right)^n.
\end{align*}
In this identity, let $e\mapsto q$ and $a\mapsto dt$.  This gives
\begin{align*}
\sum_{n\ge 0}&\frac{(dtq/bc,d;q)_n}{(dtq/b,dtq/c;q)_n}t^n=\frac{1}{1-t}\sum_{n\ge 0}\frac{(1-dtq^{2n})(b,c,d;q)_nq^{\binom{n}{2}}}{(dtq/b,dtq/c,qt;q)_n}\left(-\frac{dt^2q}{bc}\right)^n.
\end{align*}
Now let $d\mapsto az$ and $c\mapsto at$. We obtain
\begin{align*}
\sum_{n\ge 0}&\frac{(zq/b,az;q)_n}{(aztq/b,zq;q)_n}t^n=\frac{1}{1-t}\sum_{n\ge 0}\frac{(1-aztq^{2n})(b,at,az;q)_nq^{\binom{n}{2}}}{(aztq/b,zq,qt;q)_n}\left(-\frac{ztq}{b}\right)^n.
\end{align*}
Finally, replacing $b$ by $q/b$ completes the proof.
\end{proof}
Concerning the left-hand sides of Theorems \ref{mth2} and \ref{mth3}, note that
\begin{align*}
\lim_{t\rrw 1^-}(1-t)\sum_{n\ge 0}\frac{(az,bz;q)_n}{(abzt,qz;q)_n}t^n&=1+\lim_{t\rrw 1^-}\sum_{n\ge 1}\left(\frac{(az,bz;q)_n}{(abzt,qz;q)_n}-\frac{(az,bz;q)_{n-1}}{(abzt,qz;q)_{n-1}}\right)t^n\\
&=\frac{(az,bz;q)_\infty}{(abz,qz;q)_\infty}.
\end{align*}
Thus the $t\rrw 1^-$ limiting cases of \eqref{eqm0} and \eqref{eqm1} yield
the very-well-poised ${}_5\phi_5$ summation \eqref{eq:5phi5},
and Heine's $q$-analogue of Gau{\ss}' summation formula \eqref{eq:qGauss},
both stated earlier.

We are ready to give the proof of Theorems \ref{mth2} and \ref{mth3}.
\begin{proof}[Proof of Theorem~\ref{mth2}]
We have, for any $k\in\bN$,
\begin{align*}
  \sum_{n\ge k}\frac{(a,b;q)_n}{(abz,q;q)_n}z^n
  &=\frac{(a,b;q)_kz^k}{(abz,q;q)_k}
    \sum_{n\ge 0}\frac{(aq^k,bq^k;q)_nz^n}{(abzq^k,q^{1+k};q)_n}.
\end{align*}
We now apply \eqref{eqm1} with $t=q^k$, and obtain
\begin{align}
  \sum_{n\ge k}\frac{(a,b;q)_n}{(abz,q;q)_n}z^n
  &=\frac{(a,b;q)_kz^k}{(abz,q;q)_k}\frac{1-q^k}{1-z}
    \sum_{n\ge 0}\frac{(az,bz;q)_n}{(abzq^k,qz;q)_n}q^{kn}\notag\\
  &=\frac{(az,bz;q)_\infty(a,b;q)_kz^k}{(abz,z;q)_\infty (q;q)_{k-1}}
    \sum_{n\ge 0}\frac{(abzq^{k+n},q^{1+n}z;q)_\infty}{(azq^n,bzq^n;q)_\infty}
    q^{kn}.\label{eq:to}
\end{align}
Next we apply Heine's $q$-analogue of Gau{\ss}' summation
formula \eqref{eq:qGauss}, and further
use the non-terminating $q$-binomial theorem \eqref{eq:ntqbin}
to expand each of the quotients
$$\frac{ (abzq^{k+n};q)_\infty}{(azq^{k+n};q)_\infty}\quad
\text{and}\quad \frac{ (q^{1+n}z;q)_\infty}{(bzq^n;q)_\infty}$$
in the above sum.
This serves us, using \eqref{eq:to}, to find that
\begin{align*}
  &1-\frac{(abz,z;q)_\infty}{(az,bz;q)_\infty}\sum_{0\le n< k}
    \frac{(a,b;q)_n}{(abz,q;q)_n}z^n\\
  &=\frac{(abz,z;q)_\infty}{(az,bz;q)_\infty}\sum_{n\ge k}
    \frac{(a,b;q)_n}{(abz,q;q)_n}z^n\\
  &=\frac{(a,b;q)_kz^k}{(q;q)_{k-1}}\sum_{n\ge 0}q^{kn}
    \sum_{h_1,h_2\ge 0}\frac{(bq^k;q)_{h_1}(azq^n)^{h_1}}{(q;q)_{h_1}}
    \frac{(q/b;q)_{h_2}(bzq^n)^{h_2}}{(q;q)_{h_2}}\\
  &=\frac{(a;q)_k}{(q;q)_{k-1}}\sum_{h_1,h_2\ge 0}
    \frac{(b;q)_{h_1+k}a^{h_1}}{(q;q)_{h_1}}
    \frac{(q/b;q)_{h_2}b^{h_2}}{(q;q)_{h_2}}\frac{z^{h_1+h_2+k}}{1-q^{h_1+h_2+k}}.
\end{align*}
That is, we have
\begin{align*}
  &\frac{(abz,z;q)_\infty}{(az,bz;q)_\infty}
    \sum_{0\le n< k}\frac{(a,b;q)_nz^n}{(abz,q;q)_n}\\
  &=1-\frac{(a;q)_k}{(q;q)_{k-1}}\sum_{h\ge 0, \ell\ge 0}
    \frac{(b;q)_{\ell+k}a^{\ell}}{(q;q)_{\ell}}\frac{(q/b;q)_{h}b^{h}}{(q;q)_{h}}\frac{z^{h+\ell+k}}{1-q^{h+\ell+k}}\\
  &=1-(a;q)_k\sum_{h, \ell\ge 0}\frac{(b;q)_{\ell+k}(q/b;q)_{h}
    a^{\ell}b^hz^{h+\ell+k}}{(q;q)_{h+\ell+k}}
    \frac{(q;q)_{h+\ell+k-1}}{(q;q)_{k-1}(q;q)_\ell (q;q)_h}.
\end{align*}
Finally, using the notation for the $q$-binomial coefficients, we obtain
\begin{align*}
  &\frac{(abz,z;q)_\infty}{(az,bz;q)_\infty}\sum_{0\le n< k}
    \frac{(a,b;q)_nz^n}{(abz,q;q)_n}\\
  &=1-(a;q)_k\sum_{h, \ell\ge 0}\frac{(b;q)_{\ell+k}(q/b;q)_{h}
    a^{\ell}b^hz^{h+\ell+k}}{(q;q)_{h+\ell+k}}\qbm{h+\ell+k-1}{k-1}_q
    \qbm{h+\ell}{\ell}_q.
\end{align*}
The substitutions $a\mapsto -a$, $b\mapsto -b$, and $z\mapsto -z$
complete the proof of the theorem.
\end{proof}
\begin{proof}[Proof of Theorem \ref{mth3}] We have, for any $k\in\bN$,
\begin{align*}
  &\sum_{n\ge k}\frac{(1-azq^{2n})(q/b,a,az;q)_n}{(abz,qz,q;q)_n}
    q^{\binom{n}{2}}(-bz)^n\\
  &=\frac{(q/b,a,az;q)_kq^{\binom{k}{2}}(-bz)^k}{(abz,qz,q;q)_k}
    \sum_{n\ge 0}\frac{(1-azq^{2k}q^{2n})(q^{1+k}/b,aq^k,azq^k;q)_n}
    {(abzq^k,qzq^{k},q^{1+k};q)_n}q^{\binom{n}{2}}(-bzq^k)^n.
\end{align*}
Using \eqref{eqm0} with $t=q^k$, $z\mapsto zq^k$ and $b\mapsto b/q^k$,
we obtain after some elementary manipulations
\begin{align}\label{5phi5t}
  &\sum_{n\ge k}\frac{(1-azq^{2n})(q/b,a,az;q)_n}{(abz,qz,q;q)_n}
    q^{\binom{n}{2}}(-bz)^n\nonumber\\
  &=\frac{(q/b,a,az;q)_kq^{\binom{k}{2}}(-bz)^k}{(abz,qz,q;q)_k}(1-q^k)
    \sum_{n\ge 0}\frac{(azq^k,bz;q)_nq^{kn}}{(abzq^k,qzq^k;q)_n}\nonumber\\
  &=\frac{(az,bz;q)_\infty}{(abz,qz;q)_\infty}
    \frac{(q/b,a;q)_kq^{\binom{k}{2}}(-bz)^k}{(q;q)_{k-1}}
    \sum_{n\ge 0}\frac{ (abzq^{k+n},qzq^{k+n};q)_\infty q^{kn}}
    {(azq^{k+n},bzq^n;q)_\infty}.
\end{align}
Next we use the ${}_5\phi_5$ summation \eqref{eq:5phi5},
and the non-terminating $q$-binomial theorem \eqref{eq:qGauss}
to expand each of the products
$$\frac{ (abzq^{k+n};q)_\infty}{(azq^{k+n};q)_\infty}\quad
\text{and}\quad\frac{ (qzq^{k+n};q)_\infty}{(bzq^n;q)_\infty}$$
in \eqref{5phi5t}. This serves us to find that
\begin{align*}
  &1-\frac{(abz,qz;q)_\infty}{(az,bz;q)_\infty}\sum_{0\le n< k}
    \frac{(1-azq^{2n})(q/b,a,az;q)_n}{(abz,qz,q;q)_n}q^{\binom{n}{2}}(-bz)^n\\
  &=\frac{(q/b,a;q)_kq^{\binom{k}{2}}(-bz)^k}{(q;q)_{k-1}}
    \sum_{n\ge 0}q^{kn}\sum_{h_1,h_2\ge 0}
    \frac{(b;q)_{h_1}(azq^{n+k})^{h_1}}{(q;q)_{h_1}}
    \frac{(q^{1+k}/b;q)_{h_2}(bzq^{n})^{h_2}}{(q;q)_{h_2}}\\
  &=\frac{(a;q)_kq^{\binom{k}{2}}(-b)^k}{(q;q)_{k-1}}\sum_{h_1,h_2\ge 0}
    \frac{(b;q)_{h_1}(q/b;q)_{h_2+k}(aq^k)^{h_1}}{(q;q)_{h_1}}
    \frac{b^{h_2}}{(q;q)_{h_2}}\frac{z^{h_1+h_2+k}}{1-q^{h_1+h_2+k}}.
\end{align*}
That is, we have
\begin{align*}
  1-&\frac{(abz,qz;q)_\infty}{(az,bz;q)_\infty}\sum_{0\le n< k}
      \frac{(1-azq^{2n})(q/b,a,az;q)_n}{(abz,qz,q;q)_n}
      q^{\binom{n}{2}}(-bz)^n\\
    &=(a;q)_kq^{\binom{k}{2}}(-b)^k\sum_{h, \ell\ge 0}
      \frac{(b;q)_{h}(q/b;q)_{\ell+k}(aq^k)^{h}b^{\ell}
      z^{h+\ell+k}}{(q;q)_{h+\ell+k}}
      \frac{(q;q)_{h+\ell+k-1}}{(q;q)_{k-1}(q;q)_{\ell}(q;q)_{h}}\\
    &=(-1)^k(a;q)_kq^{\binom{k}{2}}\sum_{h,\ell\ge 0}
      \frac{(b;q)_{h}(q/b;q)_{\ell+k}(aq^kz)^h(bz)^{\ell+k}}{(q;q)_{h+\ell+k}}
      \qbm{h+\ell+k-1}{k-1}_q\qbm{h+\ell}{\ell}_q.
\end{align*}
The substitutions $a\mapsto -a$, $b\mapsto -b$, and $z\mapsto -z$
complete the proof of \eqref{eq5phi51}.
\end{proof}

%\section*{Acknowledgement}
%The authors would like to thank the anonymous referees for their very helpful comments and
%suggestions.

%\bibliographystyle{plain}
%\bibliography{testtqs}
\end{document}